\colorlet{refkey}{orange!20}
\colorlet{labelkey}{blue!30}
\newtheorem{theorem}{Theorem}[section]
\newtheorem*{theorem*}{Theorem}
\newtheorem{lemma}[theorem]{Lemma}
\newtheorem*{question*}{Question}
\theoremstyle{definition}
\newtheorem{definition}[theorem]{Definition}
\newtheorem*{definition*}{Definition}
\theoremstyle{remark}
\newtheorem*{remark}{Remark}
\newcommand{\abs}[1]{\left\lvert#1\right\rvert}
\newcommand{\tr}{\operatorname{tr}}
\newcommand{\RR}{\mathbb{R}}
\newcommand{\cP}{\mathcal{P}}
\newcommand{\dd}{d}
\newcommand{\R}{\mathbb{R}}
\newcommand{\vu}{\mathbf{u}}
\newcommand{\vv}{\mathbf{v}}
\newcommand{\va}{\mathbf{a}}
\newcommand{\iskt}{\substack{i \in S\\k \in T}}
\author{Jacob Fox}
\address{Department of Mathematics, Stanford University, Stanford, CA 94305.}
\email{jacobfox@stanford.edu}
\thanks{J.~Fox is supported by a Packard Fellowship, by NSF CAREER award DMS 1352121,
and by an Alfred P. Sloan Fellowship}
\author{L\'aszl\'o Mikl\'os Lov\'asz}
\address{Department of Mathematics\\ UCLA\\ Los Angeles, CA 90095.}
\email{lmlovasz@math.ucla.edu}
\thanks{L.~M.~Lov\'asz is supported by NSF Postdoctoral Fellowship Award DMS 1705204.} 
\author{Yufei Zhao}
\address{Department of Mathematics\\ MIT\\ Cambridge, MA 02139.}
\email{yufeiz@mit.edu}
\thanks{Y.~Zhao is partially supported by NSF award DMS 1362326.}
\title{A fast new algorithm for weak graph regularity}
\date{\today}
\begin{document}

\begin{abstract}
	We provide a deterministic algorithm that finds, in $\epsilon^{-O(1)} n^2$ time, an $\epsilon$-regular Frieze--Kannan partition of a graph on $n$ vertices. The algorithm outputs an approximation of a given graph as a weighted sum of $\epsilon^{-O(1)}$ many complete bipartite graphs.
	
	As a corollary, we give a deterministic algorithm for estimating the number of copies of $H$ in an $n$-vertex graph $G$ up to an additive error of at most $\epsilon n^{v(H)}$, in time $\epsilon^{-O_H(1)}n^2$.
\end{abstract}

\subjclass[2010]{05C85, 05C50, 05D99}

\maketitle

\section{Introduction}

The regularity method, based on Szemer\'edi's regularity lemma \cite{Sz76}, is one of the most powerful tools in graph theory. Szemer\'edi \cite{Sz75} used an early version in the proof of his celebrated theorem on long arithmetic progressions in dense subsets of the integers. Roughly speaking, the regularity lemma says that every large graph can be partitioned into a small number of parts such that the bipartite subgraph between almost every pair of parts is random-like. 
One of the main drawbacks of the original regularity lemma is that it requires a tower-type number of parts, where the height of the tower depends on an error parameter $\epsilon$. However, for many applications, the full power of the regularity lemma is not needed, and a weaker notion of Frieze-Kannan regularity suffices.

To state the regularity lemmas requires some terminology. Let $G$ be a graph, and $X$ and $Y$ be (not necessarily disjoint) vertex subsets. Let $e(X,Y)$ denote the number of pairs vertices $(x,y) \in X \times Y$ that are edges of $G$. The {\it edge density} $d(X,Y)= e(X,Y) / (|X||Y|)$ between $X$ and $Y$ is the fraction of pairs in $X \times Y$ that are edges. The pair $(X,Y)$ is {\it $\epsilon$-regular} if for all $X' \subseteq X$ and $Y' \subseteq Y$ with $|X'| \geq \epsilon|X|$ and $|Y'| \geq \epsilon |Y|$, we have $|d(X',Y')-d(X,Y)|<\epsilon$. Qualitatively, a pair of parts is $\epsilon$-regular with small $\epsilon$ if the edge densities between pairs of large subsets are all roughly the same. A vertex partition $V=V_1 \cup \ldots \cup V_k$ is {\it equitable} if the parts have size as equal as possible, that is we have $||V_i|-|V_j|| \leq 1$ for all $i,j$. An equitable vertex partition with $k$ parts is {\it $\epsilon$-regular} if all but $\epsilon k^2$ pairs of parts $(V_i,V_j)$ are $\epsilon$-regular. The regularity lemma states that for every $\epsilon>0$ there is a (least) integer $K(\epsilon)$ such that every graph has an $\epsilon$-regular equitable vertex partition into at most $K(\epsilon)$ parts. 


To state Frieze-Kannan regularity precisely, first, we extend the definition of $e(X,Y)$ and $d(X,Y)$ to weighted graphs. Below by \emph{weighted graph} we mean a graph with edge-weights. Given two sets of vertices $X$ and $Y$, we let $e(X,Y)$ denote the sum of the edge-weights over pairs $(x,y) \in X \times Y$ (taking $0$ if a pair does not have an edge). Let $d(X,Y)=e(X,Y)/(|X||Y|)$ as earlier. Recall that the cut metric $\dd_\square$ between two graphs $G$ and $H$ on the same vertex set $V = V(G) = V(H)$ is defined by
\[
\dd_\square(G,H) := \max_{U,W \subseteq V} \frac{|e_G(U,W) - e_H(U,W)|}{|V|^2},
\] and this extends to graphs with weighted edges, and can be adapted to bipartite graphs (with given bipartitions). Given any edge-weighted graph $G$ and any partition $\cP \colon V = V_1 \cup V_2 \cup \dots \cup V_t$ of the vertex set of $G$ into $t$ parts, let $G_\cP$ denote the weighted graph with vertex set $V$ obtained by giving weight $d_{ij} := d(V_i,V_j)$ to all pairs of vertices in $V_i \times V_j$, for every $1 \le i \le j \le t$. We say $\cP$ is an \emph{$\epsilon$-regular Frieze--Kannan} (or \emph{$\epsilon$-FK-regular}) partition if $\dd_\square(G,G_\cP)\le \epsilon$. In other words, $\cP$ is an $\epsilon$-regular Frieze--Kannan partition if
\begin{equation} \label{eq:FK-pair}
\left\lvert e(S,T) - \sum_{i,j=1}^t d_{ij} |S \cap V_i||T \cap V_j| \right\rvert \le \epsilon |V|^2.
\end{equation}
for all $S,T \subseteq V$. We say that sets $S$ and $T$ \emph{witness} that $\cP$ is not $\epsilon$-FK-regular if the above inequality is violated.

Frieze and Kannan~\cite{FK96,FK99a} proved the following regularity lemma. 

\begin{theorem}[Frieze--Kannan]
	Let $\epsilon > 0$. Every graph has an $\epsilon$-regular Frieze--Kannan partition with at most $2^{2/\epsilon^2}$ parts.
\end{theorem}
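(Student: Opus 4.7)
The plan is to run the standard mean-square density (energy) increment argument, adapted to the Frieze--Kannan setting where the witnesses are arbitrary subsets $S,T \subseteq V$ rather than subsets of individual parts. For a partition $\cP \colon V = V_1 \cup \dots \cup V_t$, define the energy
\[
q(\cP) := \frac{1}{|V|^2}\sum_{i,j=1}^t |V_i||V_j|\, d(V_i,V_j)^2,
\]
which equals $\lVert f_{G_\cP}\rVert_{L^2(V \times V)}^2$ under the uniform probability measure on $V \times V$, and hence satisfies $0 \le q(\cP) \le 1$. Note that $f_{G_\cP}$ is precisely the conditional expectation of the adjacency function $f_G$ with respect to the $\sigma$-algebra generated by the rectangles $V_i \times V_j$, so for any refinement $\cP'$ of $\cP$, the Pythagorean theorem gives $q(\cP') = q(\cP) + \lVert f_{G_{\cP'}} - f_{G_\cP}\rVert_2^2$.

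The main step is the energy increment lemma: if $\cP$ is not $\epsilon$-FK-regular, then there is a refinement $\cP'$ with $|\cP'| \le 4|\cP|$ and $q(\cP') \ge q(\cP) + \epsilon^2$. To prove this, let $S,T \subseteq V$ be witnesses as in~\eqref{eq:FK-pair}, and let $\cP'$ be the common refinement of $\cP$ with $\{S, V\setminus S\}$ and $\{T, V\setminus T\}$; this splits each $V_i$ into at most $4$ pieces. The key observation is that $\mathbf{1}_{S \times T}$ is measurable with respect to the $\cP' \times \cP'$ rectangle $\sigma$-algebra, so
\[
\langle f_G - f_{G_\cP},\, \mathbf{1}_{S \times T}\rangle = \langle f_{G_{\cP'}} - f_{G_\cP},\, \mathbf{1}_{S \times T}\rangle,
\]
where the inner product uses the uniform probability measure on $V \times V$. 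The witness condition says the left side has absolute value greater than $\epsilon$, while Cauchy--Schwarz bounds the right side by $\lVert f_{G_{\cP'}} - f_{G_\cP}\rVert_2 \cdot \lVert \mathbf{1}_{S\times T}\rVert_2 \le \lVert f_{G_{\cP'}} - f_{G_\cP}\rVert_2$. Combining with the Pythagorean identity yields $q(\cP') - q(\cP) \ge \epsilon^2$.

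Finally, iterate: start with the trivial partition $\cP_0 = \{V\}$, which has $q(\cP_0) \ge 0$. As long as the current partition $\cP_k$ is not $\epsilon$-FK-regular, apply the increment lemma to obtain $\cP_{k+1}$ with $|\cP_{k+1}| \le 4|\cP_k|$ and $q(\cP_{k+1}) \ge q(\cP_k) + \epsilon^2$. Since $q \le 1$ throughout, this process terminates after at most $\lfloor 1/\epsilon^2 \rfloor$ steps, producing an $\epsilon$-FK-regular partition with at most $4^{1/\epsilon^2} = 2^{2/\epsilon^2}$ parts.

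The only part of the argument requiring care is the energy increment: one must recognize that the witness indicator $\mathbf{1}_{S \times T}$ lives in the refined $\sigma$-algebra, which is what lets Cauchy--Schwarz convert the $L^1$-type witness bound into an $L^2$ increment. The rest (boundedness of $q$, monotonicity under refinement, and the iteration count) is routine once this single estimate is in hand.
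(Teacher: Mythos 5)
Your proof is correct and follows exactly the energy-increment scheme the paper itself sketches (in the three bullet points preceding Theorem~\ref{thmstep}) as the standard route to Frieze--Kannan regularity; the paper does not write out the argument but cites Frieze and Kannan for it. Your identification of $\mathbf{1}_{S\times T}$ as $\cP'\times\cP'$-measurable, the Cauchy--Schwarz step turning the witness bound into an $L^2$ increment, the Pythagorean identity for conditional expectations, and the count $4^{m}\le 2^{2/\epsilon^2}$ are all in order.
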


There is a variant of the weak regularity lemma, where the final output is not a partition of $V$ into $2^{\epsilon^{-O(1)}}$ parts, but rather an approximation of the graphs as a sum of $\epsilon^{-O(1)}$ complete bipartite graphs, each assigned some (not necessarily nonnegative) weight, see \cite{FK99a}. For $S,T \subseteq V$, we denote by $K_{S,T}$ the weighted graph where an edge $\{s,t\}$ has weight 1 if $s \in S$ and $t \in T$ (and weight 2 if $s,t \in S \cap T$) and weight zero otherwise. For any $c \in \RR$, by $c G$ we mean the weighted graph obtained from $G$ by multiplying every edge-weight by $c$. For a pair of weighted graphs $G_1,G_2$ on the same set of vertices, we will use the notation $G_1+G_2$ to denote the graph on the same vertex set with edge weights summed (and weight $0$ corresponding to not having an edge). Additionally, we write $c$ to mean the constant graph with all edge-weights equal to $c$. We also use $d(G):=d(V(G),V(G))$ to denote the edge density of the weighted graph $G$. 

\begin{theorem}[Frieze--Kannan] \label{thm:FK-overlay}
	Let $\epsilon > 0$. Let $G$ be any weighted graph with $[-1,1]$-valued edge weights. There exists an $r = O(\epsilon^{-2})$, and there exist subsets $S_1, \dots, S_r, T_1, \dots, T_r \subseteq V$, and $c_1, \dots, c_k \in [-1,1]$, so that  
	\[
	\dd_\square(G, d(G) + c_1K_{S_1, T_1} + \dots + c_r K_{S_r,T_r}) \le \epsilon.
	\]
\end{theorem}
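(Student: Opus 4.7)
My approach is the standard greedy $L^2$-energy-increment scheme of Frieze--Kannan. Initialize $G_0 := d(G)$, the constant weighted graph whose edge-weights all equal $d(G)$. At iteration $i$, if $\dd_\square(G, G_i) \le \epsilon$ I terminate and output $G_i$; otherwise I take a witnessing pair $(S_{i+1}, T_{i+1})$, pick $c_{i+1} \in [-1,1]$ as specified below, and set $G_{i+1} := G_i + c_{i+1} K_{S_{i+1}, T_{i+1}}$. This yields $G_r = d(G) + \sum_{j=1}^r c_j K_{S_j, T_j}$ with $\dd_\square(G, G_r) \le \epsilon$ on termination, so the task reduces to bounding $r$.

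I would bound $r$ via the normalized mean-square potential $\Phi_i := |V|^{-2} \sum_{u,v \in V} \bigl((G - G_i)(u,v)\bigr)^2$. Since $G - d(G)$ is edge-weighted in $[-2,2]$ we get $0 \le \Phi_i \le \Phi_0 \le 4$. For $c_{i+1}$ I take the $L^2$-projection coefficient $c^* := \langle G-G_i, K\rangle / \|K\|_2^2$ clipped to $[-1,1]$, where $K := K_{S_{i+1}, T_{i+1}}$ and $\langle \cdot, \cdot \rangle$ is the inner product on symmetric functions $V \times V \to \RR$ that induces $\Phi$. An unpacking of definitions gives $|\langle G-G_i, K\rangle| > 2\epsilon$ from the cut-norm witness (after accounting for the factor of $2$ coming from the symmetric definition of $K$), while $\|K\|_2^2 \le 4$ by a trivial pointwise bound on $K$.

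The key quantitative step is the per-step decrement $\Phi_i - \Phi_{i+1} \ge \min(\epsilon^2, 2\epsilon)$, which I would verify by expanding $\Phi_{i+1} = \Phi_i - 2c_{i+1}\langle G-G_i, K\rangle + c_{i+1}^2 \|K\|_2^2$ and splitting into two cases: if $c^* \in [-1,1]$, then $c_{i+1} = c^*$ gives decrement $\langle G-G_i, K\rangle^2 / \|K\|_2^2 \ge \epsilon^2$; if instead $|c^*| > 1$, then $|\langle G-G_i, K\rangle| > \|K\|_2^2$, and rounding $c_{i+1}$ to $\pm 1$ gives decrement $\ge |\langle G-G_i, K\rangle| > 2\epsilon$. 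Combined with $\Phi_0 \le 4$, this forces $r = O(\epsilon^{-2})$, completing the proof.

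The one genuine subtlety, which I have flagged above, is the simultaneous requirement that each $c_j$ lie in $[-1,1]$ and that the per-step decrement remain $\Omega(\epsilon^2)$. Clipping the optimal projection coefficient handles this cleanly: whenever clipping is active, we are in the regime where the inner product exceeds $\|K\|_2^2$, so pushing $c_{i+1}$ to $\pm 1$ already extracts a decrement linear in $\epsilon$, which is much larger than what we need. Everything else is a routine Hilbert-space computation.
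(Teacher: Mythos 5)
Your proof is correct and is essentially the argument the paper defers to: the paper does not prove Theorem~\ref{thm:FK-overlay} itself but cites Lov\'asz--Szegedy (Lemma~4.1 of \cite{LS07}), whose abstract Hilbert-space decomposition lemma is exactly the greedy $L^2$-energy-increment scheme you carried out, specialized to the span of the cut graphs $K_{S,T}$. Your handling of the coefficient constraint $c_j \in [-1,1]$ by clipping the optimal projection coefficient -- and observing that whenever the clip is active the decrement jumps to linear in $\epsilon$ -- is the right way to close the one loophole in the naive version; the normalization factors (the factor of $2$ from symmetry in $\langle G - G_i, K_{U,W}\rangle = 2n^{-2}(e_G(U,W) - e_{G_i}(U,W))$, the bound $\|K\|_2^2 \le 4$, and $\Phi_0 \le 4$) all check out and give $r \le 4/\epsilon^2$. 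It is worth noting that this is a different (and sharper, $O(\epsilon^{-2})$ vs.\ $O(\epsilon^{-16})$) iteration than the one used in the paper's algorithmic Theorem~\ref{thmcomplalg}, which deliberately takes a \emph{fixed} small step $c_i = \pm\epsilon^8/300$ and prunes the witness sets so that row and column $L^2$-norms stay bounded -- a constraint that is needed to verify near-regularity in $\epsilon^{-O(1)}n^2$ time but is irrelevant for the purely existential statement you are proving.
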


See \cite[Lemma 4.1]{LS07} for a simple proof (given there in a more general setting of arbitrary Hilbert spaces). It is well known using the triangle inequality (see, e.g., \cite{FK99a}) that given sets and numbers as in the theorem, the common refinement of all $S_i,T_i$ must be a $2\epsilon$-regular Frieze-Kannan partition.

In addition to proving that a partition or ``cut graph decomposition'' exists, Frieze and Kannan gave probabilistic algorithms for finding a  weak regular partition \cite{FK96,FK99a} or decomposition. Two deterministic algorithms were given by Dellamonica, Kalyanasundaram, Martin, R\"odl, and Shapira \cite{DKMRS12,DKMRS15}. Specifically, in \cite{DKMRS12}, the authors gave an $\epsilon^{-6} n^{\omega+o(1)}$ time algorithm ($\omega < 2.373$ is the matrix multiplication exponent) to generate an equitable $\epsilon$-regular Frieze--Kannan partition of a graph on $n$ vertices into at most $2^{O(\epsilon^{-7})}$ parts. In \cite{DKMRS15} a different algorithm was given which improved the dependence of the running time on $n$ from $O_\epsilon(n^{\omega+o(1)})$ to $O_\epsilon(n^2)$, while sacrificing the dependence of $\epsilon$. Namely, it was shown that there is a deterministic algorithm that finds, in $2^{2^{\epsilon^{-O(1)}}} n^2$ time, an $\epsilon$-regular Frieze--Kannan partition into at most $2^{\epsilon^{-O(1)}}$ parts.

In Section \ref{secalgreg}, we give an optimal algorithm that provides the best of both worlds: We give an algorithm that finds, in $\epsilon^{-O(1)}n^2$ time, a weakly regular partition.\footnote{Theorem~\ref{thmalgweakregbest} replaces \cite[Corollary~3.5]{FLZ17}, which we retracted~\cite{FLZ17err}.} In fact, we provide an algorithm for finding a cut graph decomposition, which is more useful in some applications. The algorithm is also self-contained. 

 
\begin{theorem} \label{thmalgweakregbest}
	There is a deterministic algorithm that, given $\epsilon>0$ and an $n$-vertex graph $G$, outputs, in $\epsilon^{-O(1)}n^2$ time, subsets $S_1,S_2,...,S_r,T_1,T_2,...,T_r \subseteq V(G)$ and $c_1,c_2,...,c_r \in \{-\frac{\epsilon^8}{300},\frac{\epsilon^8}{300}\}$ for some $r = O(\epsilon^{-16})$, such that
	\[
	\dd_\square(G, d(G) + c_1K_{S_1, T_1} + \dots + c_r K_{S_r,T_r}) \le \epsilon.
	\] 
\end{theorem}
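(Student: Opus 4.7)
The plan is an iterative refinement that greedily constructs $H = d(G) + \sum c_i K_{S_i, T_i}$ one summand at a time. Initialize $H := d(G)$ and $r := 0$. At each iteration, compute the residual $W := G - H$ and run a witness subroutine that either certifies $\dd_\square(G, H) \le \epsilon$ (in which case we halt) or returns sets $S, T$ with $|e_W(S,T)|$ large enough. In the latter case, set $c_{r+1} := \pm \epsilon^8/300$ with sign matching $\mathrm{sgn}(e_W(S,T))$, update $H \leftarrow H + c_{r+1} K_{S,T}$, and loop. Correctness is immediate from the stopping condition; everything hinges on (i) bounding the iteration count and (ii) implementing the witness subroutine in $O(n^2)$ time.

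For the iteration count, use the Frobenius-type potential $\Phi(H) := \|G - H\|_F^2 / n^2 \in [0, 1]$. Expanding gives
\[
\Phi(H + cK_{S,T}) - \Phi(H) = -\tfrac{2c}{n^2} e_W(S,T) + \tfrac{c^2}{n^2}\|K_{S,T}\|_F^2.
\]
With $|c| = \epsilon^8/300$, the sign chosen correctly, $\|K_{S,T}\|_F^2 \le 2n^2$, and a witness of strength $|e_W(S,T)| \ge \epsilon^8 n^2/150$, a routine calculation gives $\Delta \Phi \le -c_0\epsilon^{16}$ for an absolute constant $c_0 > 0$. Since $\Phi \in [0,1]$, the algorithm terminates with $r = O(\epsilon^{-16})$ summands, matching the theorem.

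The technical core is the witness subroutine: given $W = G - H$ with $\|W\|_\square > \epsilon n^2$, find $S, T$ with $|e_W(S,T)| \ge (\epsilon^8/150) n^2$ in $O(n^2)$ time. My approach proceeds in two reductions. First, for any optimal rectangle $(S^*, T^*)$ witnessing $\|W\|_\square \ge \epsilon n^2$, Cauchy--Schwarz applied to $e_W(S^*, T^*) = \langle 1_{S^*}, W 1_{T^*}\rangle$ gives $\|W 1_{T^*}\|_2 \ge \epsilon n^{3/2}$, so existentially there is $y \in \{0,1\}^n$ with $\|Wy\|_2$ of order $\epsilon n^{3/2}$. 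Second, once any $y$ with $\|Wy\|_2 \ge \alpha n^{3/2}$ is in hand, the inequality $\sum_i |(Wy)_i| \ge \|Wy\|_2^2 / \max_i |(Wy)_i| \ge \alpha^2 n^2$ combined with choosing $T := \mathrm{supp}(y)$ and $S$ as the larger of $\{i : (Wy)_i > 0\}$, $\{i : (Wy)_i < 0\}$ yields $|e_W(S,T)| \ge \tfrac12 \alpha^2 n^2$. So the task reduces to finding $y \in \{0,1\}^n$ with $\|Wy\|_2 \gtrsim \epsilon^4 n^{3/2}$ in $O(n^2)$ time. I would attempt this via a constant number of ``power-style'' rounds: starting from a deterministic collection of $\{0,1\}$-vectors (the all-ones, indicators of deterministically chosen halves, and iterates $W^\top W y$ threshold-rounded back to $\{0,1\}^n$), compute $\|Wy\|_2$ for each and return the best; each round costs one matrix--vector product, i.e. $O(n^2)$.

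The main obstacle is the witness subroutine, specifically showing that $O(1)$ rounds of matrix--vector products and threshold rounding suffice to extract, deterministically, a $\{0,1\}$-vector whose $W$-action is within a fixed polynomial factor of the spectral optimum. The delicate point is controlling how much $\ell_2$-energy is lost at each rounding step; the allowed loss is $\epsilon^{O(1)}$, so a careful accounting across a bounded number of rounds is needed. Once that loss bound is established (any fixed polynomial-in-$\epsilon$ exponent would suffice after adjusting the constants in $c_i$ and $r$), the outer-loop bookkeeping, potential decrease, and total runtime (with $H$ maintained incrementally after each update) all follow by standard Hilbert-space arguments.
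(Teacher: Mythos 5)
Your outer loop and Frobenius-potential analysis match the paper's structure (and your constants $\epsilon^8/300$ and witness strength $\epsilon^8 n^2/150$ line up with what the paper actually uses), but two genuine gaps remain, and they are exactly where the paper's work lives.

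First, the witness subroutine is the technical heart of the result, and your sketch does not solve it. Deterministically finding $y\in\{0,1\}^n$ with $\|Wy\|_2\gtrsim \epsilon^4 n^{3/2}$ in $O(n^2)$ time via ``a constant number of power-style rounds with threshold rounding'' is not a known technique, and there is no reason a bounded number of such rounds from a fixed deterministic starting collection would retain $\epsilon^{O(1)}$ of the energy: a single threshold-rounding step can lose essentially all overlap with the top singular vector, and without a guaranteed warm start the power method makes no progress at all. The paper avoids this by a completely different derandomization: it constructs a sparse expander $M$ with $O(d n)$ nonzero entries and estimates $\tr(AA^\top AA^\top)=\sum_{i,j}\langle \va_i,\va_j\rangle^2$ by computing $\langle \va_i,\va_j\rangle$ only along expander edges (Lemma \ref{lemexpanderconstruction} and Theorem \ref{thmstep}); the expander mixing bound shows this estimate is within $\epsilon^4 n^4/3$ of the truth. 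A large estimate pinpoints a column index $k$ from which the sets $S,T$ are extracted essentially as in your ``second reduction.'' This expander-based trace estimation is the key idea your proposal is missing.

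Second, your own second reduction silently assumes $\max_i |(Wy)_i|\le n$, since you need $\sum_i|(Wy)_i|\ge \|Wy\|_2^2/\|Wy\|_\infty\ge \alpha^2 n^2$. That bound follows from $\|W_{i\cdot}\|_2\le\sqrt n$ for every row, which holds for $W_0=G-d(G)$ but is not preserved under naive updates $W\mapsto W-cK_{S,T}$: entries, and hence row and column $L^2$-norms, can grow over iterations, and the fourth-moment/spectral quantities you rely on degrade. The paper handles this with Lemma \ref{lemmamakeeachrowcolumnbig}, a cleanup pass that prunes $S,T$ to $S',T'$ so that every retained row/column of the subtracted block has average $\ge\epsilon'/6$; this is calibrated against $t=\pm\epsilon'/3$ precisely so that the subtraction can only \emph{decrease} each row and column $L^2$-norm (see the computation $2t\sum_{k\in T}a_{i,k}-|T|t^2\ge 0$ in the proof of Theorem \ref{thmcomplalg}). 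Without this invariant, both the paper's argument and yours break. So the proposal is correct in outline but leaves unresolved the two specific obstacles that the paper identifies and overcomes.
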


\begin{remark}
	Given a decomposition as above, we obtain the $2\epsilon$-regular Frieze-Kannan partition that gives the common refinement of all $S_i,T_i$ in time $O(nr)$, by going through the vertices of the graph, and checking, for each vertex, which parts it does and does not belong to.
\end{remark}

\begin{remark}
As in the case of the usual regularity lemma, it is possible to obtain an equitable partition in the Frieze--Kannan regularity lemma, increasing the number of parts and the cut distance by a negligible amount. This can be done by arbitrarily partitioning each part into essentially equal size parts of the desired size and a remainder part, and then arbitrarily partitioning the union of the remainder vertices into parts of the desired size. We leave the details of this algorithm to the reader.
\end{remark}

In Section \ref{sec:counts}, using the above algorithmic weak regularity lemma, we obtain a deterministic algorithm for approximating the number of copies of a fixed vertex graph $H$ in a large vertex graph $G$.\footnote{Theorem~\ref{thm:countingalgthmgood} replaces \cite[Theorem 1.4]{FLZ17}, which we retracted \cite{FLZ17err}.} Note that there is an easy randomized algorithm for estimating the number of copies of $H$ by sampling. However, it appears to be nontrivial to estimate this quantity deterministically. Duke, Lefmann and R\"odl~\cite{DLR} gave an approximation algorithm for the number of copies of a $k$-vertex graph $H$ in an $n$-vertex graph $G$ up to an error of at most $\epsilon n^k$ in time $O(2^{(k/\epsilon)^{O(1)}} n^{\omega+o(1)})$. We give a new algorithm which significantly improves the running time dependence on both $n$ and $\epsilon$.

\begin{theorem}\label{thm:countingalgthmgood}
There is a deterministic algorithm that, given $\epsilon > 0$, a graph $H$, and an $n$-vertex graph $G$, outputs, in $O(\epsilon^{-O_H(1)} n^2)$ time, the number of copies of $H$ in $G$ up to an additive error of at most $\epsilon n^{v(H)}$.
\end{theorem}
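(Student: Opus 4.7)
My plan reduces subgraph counting to homomorphism counting via M\"obius inversion on the partition lattice of $V(H)$: the number of injective copies of $H$ in $G$ is a fixed $\mathbb{Z}$-linear combination of $\operatorname{hom}(H/\pi, G)$ over the $B_{v(H)} = O_H(1)$ partitions $\pi$ of $V(H)$, where $H/\pi$ is the quotient simple graph. It therefore suffices to approximate each such hom count within $O_H(\epsilon) n^{v(H)}$ in $O(\epsilon^{-O_H(1)} n^2)$ time; I describe the approach for $\operatorname{hom}(H, G)$ itself.

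I would first apply Theorem~\ref{thmalgweakregbest} with parameter $\epsilon_0 = \Theta_H(\epsilon^{c_H})$ --- a polynomial in $\epsilon$ whose exponent $c_H > 0$ depends on $H$ --- producing in $O(\epsilon_0^{-O(1)} n^2) = O(\epsilon^{-O_H(1)} n^2)$ time a cut decomposition $G' := d(G) + \sum_{i=1}^r c_i K_{S_i, T_i}$ with $r = O(\epsilon_0^{-16})$, $|c_i| = \epsilon_0^8/300$, and $d_\square(G, G') \le \epsilon_0$.

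Next I would compute $\operatorname{hom}(H, G')$ exactly by exploiting the explicit structure of $G'$. Distributing the product $\prod_{\{u,v\} \in E(H)} G'(\phi(u), \phi(v))$ across the $r+1$ summands of each factor, and splitting each $K_{S_i, T_i}(x, y) = \mathbbm{1}[x \in S_i, y \in T_i] + \mathbbm{1}[x \in T_i, y \in S_i]$ into its two oriented indicators, expresses $\operatorname{hom}(H, G')$ as a sum of at most $(2r+1)^{e(H)} = O_H(\epsilon^{-O_H(1)})$ terms indexed by a color-orientation assignment to the edges of $H$. For each such assignment, every vertex $w \in V(H)$ is constrained to lie in an intersection $A_w$ of at most $\deg_H(w)$ sets among $\{S_i, T_i\}$, and the summand equals an explicit scalar times $\prod_{w \in V(H)} |A_w|$. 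Each $|A_w|$ is computed in $O(v(H)\, n)$ time via precomputed indicator arrays for the $S_i, T_i$, so the full computation of $\operatorname{hom}(H, G')$ takes $O_H(\epsilon^{-O_H(1)} n)$ time.

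Finally I would bound $|\operatorname{hom}(H, G) - \operatorname{hom}(H, G')| \le \epsilon n^{v(H)}/2$ by a counting-lemma argument; combined with the $O_H(1)$ hom computations, M\"obius inversion then yields the subgraph count within the claimed error. The standard telescoping counting lemma over the edges of $H$ gives an error bound of order $e(H)\, M^{e(H)-1} n^{v(H)} d_\square(G, G')$, where $M = \|G'\|_\infty$. The \emph{main obstacle} is that the cut decomposition has pointwise values as large as $M = O(\epsilon_0^{-8})$, so a direct application of this bound degrades rapidly with $e(H)$. Overcoming this requires either choosing $\epsilon_0$ as a sufficiently small polynomial in $\epsilon$ (with exponent depending on $H$) so that the standard bound still delivers the desired error --- exploiting that Theorem~\ref{thmalgweakregbest} runs in time polynomial in $\epsilon_0^{-1}$ --- or leveraging the signed structure of $G' - d(G) = \sum c_i K_{S_i, T_i}$ via a refined counting lemma tailored to cut decompositions, so that each additional edge of $H$ carrying a $(G - G')$ contribution yields an additional factor of $\epsilon_0$ (rather than of $M \epsilon_0$). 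In either case the overall running time remains $O(\epsilon^{-O_H(1)} n^2)$.
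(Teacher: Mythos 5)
Your reduction to homomorphism counting and your exact evaluation of $\hom(H,G')$ for a cut decomposition $G'$ are both fine, and you have correctly located the real difficulty: the cut decomposition has $\|G'\|_{\max}=O(\epsilon_0^{-8})$, which wrecks the telescoping counting lemma. However, neither of your two proposed repairs closes the gap. For option (a), decreasing $\epsilon_0$ makes things strictly \emph{worse}: with $r=O(\epsilon_0^{-16})$ and $|c_i|=\Theta(\epsilon_0^{8})$ one has $M:=\|G'\|_{\max}=O(\epsilon_0^{-8})$, so the standard bound $e(H)\,M^{e(H)-1}\,d_\square(G,G')$ is of order $\epsilon_0^{-8(e(H)-1)}\cdot\epsilon_0=\epsilon_0^{\,9-8e(H)}$, whose exponent is negative whenever $e(H)\ge 2$; this diverges as $\epsilon_0\to 0$, so no polynomial choice $\epsilon_0=\epsilon^{c_H}$ can help. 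For option (b), the heuristic you state (``each additional edge of $H$ carrying a $(G-G')$ contribution yields an additional factor of $\epsilon_0$'') does not match the structure of the obstruction: in the usual telescoping each term has exactly one $(G-G')$ edge, and the damaging $M$-factors come from the $G'$-edges, not from multiple $(G-G')$-edges. In a binomial expansion you do get several $(G-G')$-edges per term, but the cut norm controls only one of them; the others contribute $\Theta(M)$-sized pointwise factors, and you give no mechanism for avoiding this. As stated, this is a hope rather than a proof.

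The paper sidesteps the unbounded $\|G'\|_\infty$ problem by a genuinely different route. Working with the multipartite $\hom^*$, it picks a single edge $\{1,2\}$ of $H$, applies the cut decomposition \emph{only} to the bipartite graph $G_{12}$ between the corresponding parts, and uses the exact identity $\hom^*(H,G')=d(G_{12})\hom^*(H',G)+\sum_i c_i\hom^*(H',G^{(i)})$, where $H'$ is $H$ with that edge deleted and $G^{(i)}$ is a vertex-restriction of $G$ (hence still $\{0,1\}$-valued). The counting-lemma step then compares $\hom^*(H,G)$ to $\hom^*(H,G')$ where every factor other than the one edge being approximated comes from $G$ itself and is $\{0,1\}$-valued, so $M$-factors never appear; one then recurses on $H'$ with a smaller error target $O(\epsilon^9)$ and a fresh regularization at each level. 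It is worth noting that the paper's own footnote in this proof points out that a retracted earlier version of the result made precisely the slip you are running into, namely implicitly assuming $0\le G'\le 1$.
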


\begin{remark}
An examination of the proof shows that the exponent of $\epsilon^{-1}$ in the running time can be $9^{\abs{H}}$ (though not believed to be optimal). For example, we can count the number of cliques of order $1000$ in an $n$-vertex graph up to an additive error $n^{1000-10^{-1000000}}$ in time $O(n^{2.1})$.
\end{remark}

\begin{remark}
All results here can be generalized easily to weighted graphs $G$ with bounded edge-weights.
\end{remark}

\section{Algorithmic weak regularity} \label{secalgreg}

Here we prove Theorem \ref{thmalgweakregbest}. We will prove the following, roughly equivalent form. In order to state it, we first give some notation.
Given a matrix $A$, we denote by $\|A\|$ the spectral norm, i.e. the largest singular value.
It is well known that this is equal to the operator norm of $A$ when viewed as an operator between $L^2$-spaces.
We also use the Frobenius norm 
\[\|A\|_F=\sqrt{\sum_{i,j} a_{i,j}^2}.
\] 
and the entry-wise maximum norm
\[\|A\|_{\max}=\sup_{i,j} |a_{i,j}|.
\]
Given a set $S \subseteq [n]$, we will denote by $\bm{1}_S \in \RR^n$ the characteristic vector of $S$.

\begin{theorem} \label{thmcomplalg}
There is an algorithm that, given an $\epsilon > 0$ and a matrix $A \in [-1,1]^{n \times n}$, outputs, in $\epsilon^{-O(1)}n^2$ time, subsets $S_1, \dots, S_r, T_1, \dots, T_r \subseteq [n]$ and real numbers $c_1, \dots, c_r \in \{- \frac{\epsilon^8}{300},\frac{\epsilon^8}{300}\}$ for some $r = O(\epsilon^{-16})$, such that, setting
\[A'=\sum_{i=1}^r c_i \bm{1}_{S_i} \bm{1}_{T_i}^\top ,
\] 
each row and column of $A-A'$ has $L^2$-norm at most $\sqrt n$ (i.e. the sum of the squares of the entries is at most $n$), and
\[
\|A-A'\| \le \epsilon n.
\] 
\end{theorem}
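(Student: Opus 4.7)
The plan is an iterative greedy algorithm maintaining an approximation $A'$ of $A$ as a sum of signed cut matrices (starting from $A' = 0$) and tracking the Frobenius potential $\Phi := \|A - A'\|_F^2 \le \|A\|_F^2 \le n^2$. Writing $B := A - A'$ for the current residual, at each step I search for a triple $(S, T, \sigma) \in 2^{[n]} \times 2^{[n]} \times \{\pm 1\}$ satisfying $\sigma\,\mathbf{1}_S^\top B \mathbf{1}_T \ge \gamma$, where $\gamma$ is chosen of order $(\epsilon^8/300)\,n^2$; upon success I update $A' \leftarrow A' + \sigma(\epsilon^8/300)\,\mathbf{1}_S\mathbf{1}_T^\top$. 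The elementary identity
\[
\|B - c\,\mathbf{1}_S\mathbf{1}_T^\top\|_F^2 - \|B\|_F^2 = -2c\,\mathbf{1}_S^\top B \mathbf{1}_T + c^2|S||T|,
\]
with $c = \sigma\epsilon^8/300$, then yields a per-step potential drop of $\Omega(\epsilon^{16} n^2)$, bounding the iteration count by $r = O(\epsilon^{-16})$ as claimed.

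The core subroutine must find such a witness, or certify its absence, in $\epsilon^{-O(1)} n^2$ time. My plan is an alternating matrix-vector scheme that avoids a full SVD: starting from a candidate $T$ (uniform random, $[n]$, or the support of a heavy row of $B$), compute $v := B\mathbf{1}_T$, take $S$ to be the support of either $v_+$ or $v_-$ so that $|\mathbf{1}_S^\top v|$ is maximal, and iterate with $B^\top \mathbf{1}_S$ in place of $B\mathbf{1}_T$. Each round costs $O(n^2)$, and $\epsilon^{-O(1)}$ rounds of this power-iteration-with-thresholding suffice to expose a cut witness of size $\Omega(\gamma)$ whenever $\|B\|_\square \ge \gamma$.

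Upon certification that $\|B\|_\square < \gamma$, the spectral bound $\|B\| \le \epsilon n$ follows from the chain $\|B\|_{2\to 2} \le \|B\|_{\infty\to 1} \le 4\|B\|_\square$ (the first inequality because unit $\ell_2$ vectors lie in $[-1,1]^n$, the second by splitting $\{\pm 1\}$ vectors into two $\{0,1\}$ indicators). The row and column $L^2$ bounds call for an extra check woven into the main loop: if $\|B_{i,:}\|_2^2 > n$ for some row $i$, then thresholding the row by the sign of its entries produces a cut witness of the required magnitude (after possibly grouping $i$ with other rows of matching signed structure so that $|S|$ is large enough), which is consumed by the same potential argument, simultaneously reducing $\|B_{i,:}\|_2$.

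The hardest part is coordinating both invariants --- the spectral bound and the row/column $L^2$ bound --- under a single potential while staying within the $\epsilon^{-O(1)} n^2$ time budget. The Frobenius potential $\Phi$ dominates both obstructions: a large singular direction and a heavy row each contribute significantly to $\|B\|_F^2$, so each cut-matrix update chips away at whichever obstruction currently blocks termination. Alternating rounding --- rather than exact spectral decomposition --- is what keeps the per-iteration cost polynomial in $\epsilon^{-1}$ and linear in $n^2$, and making these two phases interact cleanly is the main technical content to be filled in.
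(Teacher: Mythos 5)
There is a fatal gap at the certification step. You stop when no pair $(S,T)$ achieves $\sigma\,\mathbf 1_S^\top B\mathbf 1_T\ge\gamma$ with $\gamma=\Theta(\epsilon^8 n^2)$, i.e.\ when the (unnormalized) cut norm of $B$ is below $\gamma$, and then you invoke $\|B\|_{2\to 2}\le\|B\|_{\infty\to1}\le4\|B\|_\square$ to conclude $\|B\|\le\epsilon n$. The two inequalities in the chain are individually correct, but they together give only $\|B\|\le4\gamma=\Theta(\epsilon^8 n^2)$, which is vastly weaker than the target $\epsilon n$ (for large $n$ it is not a bound at all, since the spectral norm of a $[-O(1),O(1)]$ matrix is already $O(n)$). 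The issue is one of scale: for bounded matrices the spectral norm lives at scale $n$ while the cut norm lives at scale $n^2$, and $\|B\|_{2\to2}\le4\|B\|_\square$ loses a full factor of $n$. To make your certification valid you would have to drive $\|B\|_\square$ below $\epsilon n/4$, but the Frobenius potential argument would then require $\Omega((n/\epsilon)^2)$ cut-matrix terms, destroying both the bound $r=O(\epsilon^{-16})$ and the running time.

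The ingredient you are missing is precisely why the paper insists on the bounded row/column $L^2$-norm invariant: under that hypothesis (plus bounded entries), a singular value $\ge\epsilon n$ of $B$ genuinely forces a cut of size $\ge\epsilon^{O(1)}n^2$, because the corresponding singular vectors cannot be too concentrated and hence have $L^1$ mass $\gtrsim\epsilon\sqrt n$. The paper exploits this not by going through the cut norm at all, but by estimating $\tr(BB^\top BB^\top)=\sum_i\sigma_i^4$ directly (Theorem~\ref{thmstep}), which certifies $\|B\|\le\epsilon n$ when small, and when large yields a column $k$ from which a cut witness can be read off via the row-norm bounds. That trace is estimated in $\epsilon^{-O(1)}n^2$ time by restricting the pairwise inner products $\ang{\va_i,\va_j}$ to the edges of an explicit expander (Lemma~\ref{lemexpanderconstruction}) — this is the piece of deterministic algorithmic content that your proposal delegates to an unjustified ``power-iteration-with-thresholding'' heuristic; no argument is given that such alternating rounding finds a witness (or certifies its absence) in $\epsilon^{-O(1)}$ rounds, and making that rigorous is essentially the hard part. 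Finally, your treatment of the row/column $L^2$ invariant (``grouping $i$ with other rows of matching signed structure'') is vague; the paper handles it cleanly via Lemma~\ref{lemmamakeeachrowcolumnbig}, which prunes the witness $(S,T)$ until every surviving row and column has cut density $\ge\epsilon' n/6$, so that subtracting $t\,\mathbf 1_S\mathbf 1_T^\top$ with $t=\epsilon'/3$ can only decrease each affected row and column $L^2$-norm.
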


It is well-known that if $G$ and $H$ are weighted bipartite graphs between two sets $X,Y$ of size $n$, and $A_G,A_H$ are the adjacency matrices, with rows corresponding to $X$ and columns corresponding to $Y$, then 
\[d_\square(G,H) \le \frac{\|A_G-A_H\|}{n}
.\]
Indeed, for any $S,T \subseteq [n]$, taking the characteristic vectors $\bm{1}_S$ and $\bm{1}_T$, we have
\[|e_G(S,T)-e_H(S,T)|=\left|\bm{1}_S^\top(A_G-A_H)\bm{1}_T^\top\right| \le \|A_G-A_H\|\|\|\bm{1}_S\|_2\|\bm{1}_T\|_2 \le \|A_G-A_H\|n
.\]

Therefore, this theorem indeed implies Theorem \ref{thmalgweakregbest} (taking $A$ to be $A_G-d(G)\mathbf{1}\mathbf{1}^\top$).

The proof of the Frieze--Kannan regularity lemma and its algorithmic versions, roughly speaking, run as follows:
\begin{itemize}
	\item Given a partition (starting with the trivial partition with one part), either it is $\epsilon$-FK-regular (in which case we are done), or we can exhibit some pair of subsets $S,T$ of vertices that witness the irregularity (in the algorithmic versions, one may only be guaranteed to find $S$ and $T$ that witness irregularity for some smaller value of $\epsilon$).
	\item Refine the partition by using $S$ and $T$ to split each part into at most four parts, thereby increasing the total number of parts by a factor of at most $4$. 
	\item Repeat. Use a mean square density increment argument to upper bound the number of possible iterations.
\end{itemize}

This can be modified to prove the approximation version. Roughly speaking, to find the appropriate $S_i, T_i, c_i$, in the second step of the above outline of the proof of the weak regularity lemma, instead of using $S$ and $T$ to refine the existing partition, we subtract $c \bm{1}_S\bm{1}_T^\top$ from the remaining matrix, for a carefully chosen $c$. We record the corresponding $S_i, T_i, c_i$ in step $i$ of this iteration. We can bound the number of iterations by observing that the $L^2$ norm of $A - c_1\bm{1}_{S_1}\bm{1}_{T_1}^\top - \dots - c_i\bm{1}_{S_i}\bm{1}_{T_i}^\top$ must decrease by a certain amount at each step. 

As for the algorithmic versions, the main challenge is checking whether a partition is regular, or a cut graph approximation is close in cut distance. Given a matrix $A$, up to a polynomial change in $\epsilon$, having small singular values as a fraction of $n$ is equivalent to $\tr A A^\top A A^\top$ being small as a fraction of $n^4$, which roughly says that most scalar products of rows are small as a fraction of $n$. In \cite{ADLRY}, the authors use this fact to obtain an algorithm which runs in $O(n^{\omega + o(1)})$ time and either correctly states that a pair of parts is $\epsilon$-regular, or gives a pair of subsets which realizes it is not $\epsilon^{O(1)}$-regular. This was adapted in \cite{DKMRS12} to the weak regular setting. In \cite{KRT}, the authors noticed that it suffices to check the scalar products along the edges of a well-chosen expander, which has a linear number of edges in terms of $n$, allowing them to obtain an $O_\epsilon(n^2)$-time algorithm. This was also the main idea in \cite{DKMRS15}, but their algorithm is double exponential in $\epsilon^{-1}$. A further challenge in proving Theorem \ref{thmcomplalg} with the cut matrix approximation is that the entries of the approximation matrices may not stay bounded, which was used in the algorithms for checking regularity. This is problematic, because for a general matrix $A$, the singular value (divided by $n$) and the cut norm may be quite different. To counter this, we give an algorithm which checks regularity effectively under a weaker assumption that simply the $L^2$-norm of each row and each column stays bounded. Heuristically, the reason this property is useful is that it implies that if we have a singular vector (with norm $1$) with a relatively large singular value, then no entry can be ``too large'', it must be ``spread out'', which can then be used to show that a large singular value implies a large cut norm. We then show that if we are careful, we can make sure that this property holds throughout the process. 

Let us state this more precisely. Given a matrix $A$, 
let $\va_i$ be the $i$-th row of $A$ and $\va^j$ the $j$-th column. Our main ingredient then is the following theorem. Note that in the algorithm below, the parameter $C$ affects the running time but not the discrepancy of the output sets $S,T$. 

\begin{theorem} \label{thmstep}
There exists a $(C/\epsilon)^{O(1)}n^2$ algorithm which, given a matrix $A \in \R^{n \times n}$ such that $\|A\|_{\max} \le C$, and each $\|\va_i\|_2^2 \le n$, $\|\va^j\|_2^2 \le n$ (or equivalently $\|A^\top A\|_{\max}, \|AA^\top\|_{\max} \le n$), either
\begin{itemize}
\item Correctly outputs that each singular value of $A$ is at most $\epsilon n$, or
\item Outputs sets $S,T \subseteq [n]$ such that 
\[\left|\sum_{i \in S,k \in T}a_{i,k} \right| \ge \frac{\epsilon^8}{100}n^2
.\] (This implies that $A$ has a singular value that is at least $\frac{\epsilon^8}{100}n$.)
\end{itemize}
\end{theorem}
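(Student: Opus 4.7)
The plan is to use the identity $\tr((AA^\top)^2) = \sum_i \sigma_i(A)^4$ as a regularity test: since $\sigma_1(A)^4 \le \tr((AA^\top)^2)$, a bound $\tr((AA^\top)^2) < \epsilon^4 n^4$ already yields $\sigma_1(A) < \epsilon n$. The work is then to estimate this trace in $(C/\epsilon)^{O(1)} n^2$ time via an expander graph and, in the non-regular case, to extract witness sets $S, T$ by a two-round rounding procedure that respects the row- and column-norm constraints.

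First I would fix an explicit $(n, d, \lambda)$-expander $H$ with $d = \Theta(\epsilon^{-O(1)})$ and $\lambda = O(\epsilon^4)$, and compute $B_{ij} := \langle \va_i, \va_j \rangle$ for each $\{i, j\} \in E(H)$ in total time $O(dn^2)$. Writing $\tr((AA^\top)^2) = \sum_{i,j} \langle \va_i, \va_j \rangle^2$ and viewing each tensor $\va_i \otimes \va_i$ as a vector in $\R^{n^2}$, a coordinate-wise application of the expander mixing lemma gives
\[
\tr((AA^\top)^2) = \frac{2n}{d} \sum_{\{i,j\} \in E(H)} B_{ij}^2 \pm O(\lambda n^4),
\]
where the error uses $\sum_i \|\va_i\|_2^4 \le n^3$ from the row-norm hypothesis. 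If this estimate is below $\tfrac{1}{2}\epsilon^4 n^4$, declare that every singular value is at most $\epsilon n$; otherwise proceed to extraction.

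For extraction, the above-threshold estimate forces, via averaging, a row index $i^*$ with $\|A\va_{i^*}\|_2^2 \gtrsim \epsilon^4 n^3$. Set $\vv := \va_{i^*}$ and compute $\vw := A\vv$; the column-norm hypothesis gives $|w_k| \le \|\va^k\|_2 \|\vv\|_2 \le n$, so taking $S$ to be the side of $\{k : w_k > 0\}$ versus $\{k : w_k < 0\}$ with the larger $L^2$-mass yields $|\bm{1}_S^\top \vw| \ge \|\vw\|_2^2/(2n)$. Next compute $z := A^\top \bm{1}_S$ (a second $O(n^2)$ matrix-vector product): Cauchy--Schwarz gives $\|z\|_2 \ge |\bm{1}_S^\top \vw|/\|\vv\|_2$, and the bound $|z_k| \le \sqrt{|S|}\cdot\|\va^k\|_2 \le n$ lets us repeat the positive/negative split to obtain $T$. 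Chaining the inequalities, the final cut sum satisfies $|\bm{1}_S^\top A \bm{1}_T| \gtrsim \|\vw\|_2^4/n^4 \gtrsim \epsilon^8 n^2$, with the constants giving the claimed $\epsilon^8 n^2/100$.

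The main obstacle is locating a row $i^*$ with $\|A\va_{i^*}\|_2^2 \gtrsim \epsilon^4 n^3$ inside the $(C/\epsilon)^{O(1)} n^2$ budget. The natural candidate --- the row that maximizes $Q_i := \sum_{j \sim_H i} B_{ij}^2$ --- only guarantees $\|A\va_{i^*}\|_2^2 \ge Q_{i^*} \gtrsim d\epsilon^4 n^2$, which is short of $\epsilon^4 n^3$ by a factor of $n/d$. Closing this gap is the delicate step: I expect it proceeds either through a constant number of power-iteration steps seeded from the expander-selected candidate (using that the row-norm hypothesis forces the top singular vector to be spread, $\|\vv_1\|_\infty \le 1/(\epsilon\sqrt{n})$, so the iterates $(A^\top A)^k \vv_0$ have appreciable overlap with the top singular direction), or by explicitly computing $\|A\va_i\|_2^2$ for the $\epsilon^{-O(1)}$ candidates with largest $Q_i$, each at $O(n^2)$ cost. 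In either case, the essential invariant is that each intermediate vector retains $L^2$-norm $O(\sqrt n)$, so that the column-norm hypothesis keeps $|w_k|, |z_k| \le n$ and the rounding step continues to yield the needed $L^1$-bounds.
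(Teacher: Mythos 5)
Your high-level strategy matches the paper exactly: estimate $\tr\!\left((AA^\top)^2\right)=\sum_{i,j}\langle \va_i,\va_j\rangle^2$ by evaluating only the $\epsilon^{-O(1)}n$ inner products $s_{i,j}=\langle\va_i,\va_j\rangle$ along the edges of an explicit expander, declare $\|A\|\le\epsilon n$ if the estimate is small, otherwise extract a heavy vector and round it twice (positive/negative split via $L^1$ and $L^\infty$ bounds coming from the row/column norm hypothesis) to obtain $S$ and $T$. The rounding chain you describe and the constants all go through essentially as in the paper.

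The gap is the one you flag yourself, and your two proposed fixes do not close it. You need a single vector $\vv$ with $\|A\vv\|_2^2\gtrsim\epsilon^4 n^3$ and $\|\vv\|_2\le\sqrt n$; you try $\vv=\va_{i^*}$, the $i^*$-th row. Averaging guarantees such a row \emph{exists}, but the expander on row pairs $(i,j)$ cannot locate it: for a \emph{fixed} row $i$, the quantity $Q_i=\sum_{j\sim_H i}B_{ij}^2$ is a sum over the $d$ neighbors of $i$ in $H$, which is not a mixing-lemma bilinear form and carries no approximation guarantee for $\sum_j B_{ij}^2=\|A\va_i\|_2^2$. Indeed $\|A\va_i\|_2^2$ could be $\Theta(\epsilon^4 n^3)$ while $Q_i$ is tiny, and conversely, so ranking rows by $Q_i$ and checking the top few does not work; a few rounds of power iteration have no deterministic guarantee either (for a worst-case seed $(A^\top A)^k\vv_0$ can stay in a low-singular-value subspace). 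The paper sidesteps this by conditioning on the \emph{other} axis: it fixes a column index $k$ and sets $b_k=\sum_{i,j}m_{i,j}\,a_{i,k}a_{j,k}\,s_{i,j}=\sum_l\sum_{i,j}m_{i,j}\,a_{i,k}a_{i,l}a_{j,k}a_{j,l}$. For each fixed pair $(k,l)$ the inner sum \emph{is} of the form $\vv_{k,l}^\top M\vv_{k,l}$ with $(\vv_{k,l})_i=a_{i,k}a_{i,l}$, so the expander mixing lemma gives a per-$(k,l)$ error of $C^2n^2/d^c$; summing over the $n$ values of $l$ then yields a per-column estimate of $\|A^\top\va^k\|_2^2$ with error $\le\tfrac{\epsilon^4}{3}n^3$. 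Hence $\max_k b_k$ being above threshold certifies a column $\vu=\va^k$ with $\|A^\top\vu\|_2^2\ge\tfrac{\epsilon^4}{3}n^3$, and all $b_k$ are computable in $O(dn^2)$ total since $M$ has $O(dn)$ nonzeros. The structural difference is that the heavy test vector is picked from the axis orthogonal to the one carrying the expander: expander on row pairs $\Rightarrow$ select a column. You would need to make this switch (or, equivalently, put the expander on column pairs and select a row) to get a correct deterministic algorithm.
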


In the next lemma, we construct the expander along which we will check the scalar products. For an integer $n$, let $J_n$ denote the $n \times n$ matrix with each entry equal to $1$.

\begin{lemma} \label{lemexpanderconstruction}
There exist fixed absolute constants $l>0$ and $0<c<1$ such that there is an algorithm which given $d_0$ and $n$, outputs a matrix $M$ on $\RR^{n \times n}$ with nonnegative integer entries, and an integer $d$ with $d_0 \le d \le ld_0$, such that
\[\|\frac{d}{n}J_n-M\| \le d^{1-c}
.\] In other words, for any vector $\vv=(v_i)_{i=1}^n \in \R^n$, we have
\begin{equation} \label{eqexpandersumsquare}
\left|\left(\sum_i v_i \right)^2-\frac{n}{d}\vv^\top M\vv\right| \le \frac{n}{d^c} \|\vv\|_2^2
.\end{equation}
The running time of the algorithm is $O(d n (\log n)^{O(1)})$.
\end{lemma}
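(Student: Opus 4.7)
The plan is to build $M$ as a power of the adjacency matrix of an explicit constant-degree expander, amplifying its degree from $O(1)$ up to $\approx d_0$. Fix once and for all absolute constants $D$ and $\lambda<D$ together with an explicit family of $D$-regular multigraphs $H_n$ on $n$ vertices whose second-largest eigenvalue in absolute value is at most $\lambda$, constructible in $O(n(\log n)^{O(1)})$ time (e.g., the Margulis--Gabber--Galil family on $\ZZ_m\times\ZZ_m$ with $m=\lceil\sqrt{n}\,\rceil$, adjusted by a bounded number of dummy vertices so as to land on exactly $n$ vertices without damaging the uniform spectral gap). Given $d_0$, set $k:=\lceil\log_D d_0\rceil$ and $d:=D^k$, so that $d_0\le d\le D\cdot d_0$ and we may take $l:=D$. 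Let $M:=A(H_n)^k$, so its $(u,v)$ entry counts walks of length $k$ from $u$ to $v$ in $H_n$; it has non-negative integer entries, every row and column sum equals $d$, and every row has at most $d$ non-zero entries.

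For the spectral bound, $D$-regularity gives $M\bm{1}=D^k\bm{1}=d\bm{1}$, while on $\bm{1}^\perp$ each eigenvalue of $A(H_n)$ has absolute value at most $\lambda$, so each eigenvalue of $M=A(H_n)^k$ has absolute value at most $\lambda^k$. The rank-one matrix $(d/n)J_n$ agrees with $M$ on $\bm{1}$ and vanishes on $\bm{1}^\perp$, so $M-(d/n)J_n$ is zero on $\bm{1}$ and has operator norm at most $\lambda^k$ on $\bm{1}^\perp$. Taking $c:=\log(D/\lambda)/\log D\in(0,1)$ gives $\lambda^k=(\lambda/D)^k\cdot d=D^{-ck}d=d^{1-c}$, and hence $\|(d/n)J_n-M\|\le d^{1-c}$. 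The equivalent inequality \eqref{eqexpandersumsquare} then follows by applying this operator bound to $\vv$, using $\vv^\top J_n\vv=(\sum_i v_i)^2$, and multiplying by $n/d$.

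To meet the claimed time budget, I would compute $M$ row by row: for each vertex $v\in[n]$, perform a depth-$k$ depth-first traversal of $H_n$ from $v$, enumerating the $D^k=d$ walks of length $k$ and accumulating endpoint multiplicities into a sparse representation (e.g., a hash map keyed by endpoint) of row $v$ of $M$. Each walk incurs $O((\log n)^{O(1)})$ bookkeeping cost, giving $O(d(\log n)^{O(1)})$ per vertex and $O(dn(\log n)^{O(1)})$ in total, as required.

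The main nuisance is not the spectral calculation (which is immediate from raising a constant-degree expander to a power) but securing an explicit $D$-regular expander construction that produces a graph on \emph{exactly} $n$ vertices for every $n$ in near-linear time. Standard explicit constructions are naturally defined only on restricted vertex counts (squares, primes, group orders, etc.), and one must absorb the mismatch by a bounded-degree modification that does not damage the uniform spectral gap; this is routine and I would cite it rather than reprove it.
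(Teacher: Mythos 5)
Your overall plan---power up an explicit constant-degree expander to degree $\approx d_0$, then compare $M=A^k$ with $(d/n)J_n$ via the eigenvalue gap---is exactly the paper's. The spectral calculation and running-time accounting are essentially the same (the paper computes $A_{G_0}^i$ iteratively by extending neighbor lists; your depth-$k$ DFS enumeration is equivalent in cost). Two points deserve attention, one of which is a genuine gap.

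The gap is in how you handle the fact that explicit expanders don't come on every vertex count. You propose to land on exactly $n$ vertices by ``a bounded number of dummy vertices.'' With Margulis--Gabber--Galil on $\mathbb{Z}_m\times\mathbb{Z}_m$ and $m=\lceil\sqrt{n}\,\rceil$, the graph has $m^2$ vertices and the excess $m^2-n$ is $\Theta(\sqrt{n})$, not $O(1)$. Absorbing $\Theta(\sqrt{n})$ extra vertices while preserving a uniform spectral gap is not a routine ``bounded-degree modification''; it is not obviously something you can cite away, and your later spectral argument (which uses that $M\bm{1}=d\bm{1}$, i.e., that $M$ is exactly $d$-regular) would need this to actually succeed. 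The paper sidesteps this entirely with a cleaner device: build the power $\widetilde M$ on $\widetilde n\ge n$ vertices (with $\widetilde n\le Kn$), and then take $M$ to be any $n\times n$ principal submatrix of $\widetilde M$ and set $d=(n/\widetilde n)\widetilde d$. The point is that the spectral norm of a matrix cannot increase under passing to a submatrix, so $\|\frac{d}{n}J_n-M\|\le\|\frac{\widetilde d}{\widetilde n}J_{\widetilde n}-\widetilde M\|\le\widetilde d^{\,1-\widetilde c}\le d^{1-c}$. Crucially this does \emph{not} require the restricted $M$ to be a regular graph or to satisfy $M\bm 1=d\bm 1$; only the operator-norm bound is used downstream (inequality \eqref{eqexpandersumsquare} follows directly from it). You should adopt this submatrix argument rather than the dummy-vertex route.

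A minor point: you suggest accumulating row entries into a ``hash map,'' but the theorem demands a deterministic algorithm; a plain array of length $n$ per row (or keeping a multiset list of endpoints, as the paper does) avoids this.
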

\begin{proof}
Construct an $l$-regular two-sided expander $G_0$ on $[\widetilde{n}]$ for some $n\le \widetilde{n} \le Kn$ with $K$ fixed. This can be done in $n(\log n)^{O(1)}$ time. For example, Margulis \cite{Mar73} constructed an $8$-regular expander on $\mathbb{Z}_m \times \mathbb{Z}_m$ for every $m$, and Gabber and Galil \cite{GaGa81} showed that all other eigenvalues (besides $8$ with multiplicity $1$) are at most $5 \sqrt{2}<8$. For every vertex $(x,y) \in \mathbb{Z}_m \times \mathbb{Z}_m$, its eight neighbors are
\[(x \pm 2y,y),(x \pm (2y+1),y),(x,y \pm 2x),(x,y \pm (2x+1))
.\] Therefore we can compute, for each vertex, a list of neighbors in time $O(\log m)=O(\log n)$, which then takes $O(n \log n)$ time total. Alternatively, we can start with a Ramanujan graph for some fixed degree, constructed explicitly by Lubotzky, Phillips, and Sarnak \cite{LPS88}; Margulis \cite{Mar88}; and Morgenstern \cite{Mor94}.

The adjacency matrix $A_{G_0}$ has $A_{G_0}\mathbf{1}=l\mathbf{1}$ and all eigenvalues besides $l$ have absolute value at most some explicit $a<l$. Let $k$ be the integer and $\widetilde M=A_{G_0}^k$ be such that $d_0 \frac{n}{\widetilde{n}} \le \widetilde{d}:=l^k < ld_0 \frac{n}{\widetilde{n}}$. Note that $\widetilde M$ is symmetric and has nonnegative integer entries, so it is the adjacency matrix of some graph $G$ (possibly with multiple edges and loops). Clearly $\widetilde M\mathbf{1}=\widetilde d\mathbf{1}$,
 so $\widetilde d$ is an eigenvalue of $\widetilde M$, and all other eigenvalues have absolute value at most $a^k=a^{\log_l(\widetilde d)}=\widetilde d^{\log_l(a)}$. Since $a<l$, $\widetilde c:=1 - \log_l(a)>0$. This implies that
\[\|\frac{\widetilde d}{\widetilde{n}} J_{\widetilde{n}} - \widetilde M \| \le \widetilde d ^{1-\widetilde c}
.\]

Take any set of $n$ vertices, let $M$ be the restricted submatrix of $\widetilde M$, and let $d=\frac{n}{\widetilde{n}}\widetilde{d}$. As  $\frac{\widetilde{n}}{n} \le K$, and the spectral norm of a matrix cannot increase when taking a submatrix, we have that 

\[\|\frac{d}{n} J_{n} - M \| \le \|\frac{\widetilde d}{\widetilde{n}} J_{\widetilde{n}} - \widetilde M \| \le \widetilde d ^{1-\widetilde c}=\left(\frac{\widetilde{n}}{n}d\right)^{1-\widetilde{c}} \le \left(K d\right)^{1-\widetilde{c}} \le d^{1-c}
\]
for an explicit $c>0$.

We can construct $G_0$ in time $(\log n) ^{O(1)} n$. We make sure, for each vertex, to keep a list of its neighbors. We then compute $A_{G_0}^i$ for $i=1,2,...,k$. In each case, we make sure to keep a list of the $l^i$ neighbors of each vertex (with multiplicities). We can then compute $A_{G_0}^{i+1}$ in $O(l^i n)$ time by computing the list of $l^{i+1}$ neighbors for each vertex, by looking at its $l$  neighbors in $G_0$ and taking the (multiset) union. The total running time is therefore $O((\log n)^{O(1)}n+\sum_{i=1}^k l^i n)=O(((\log n)^{O(1)}+d)n)$.
\end{proof}
Alternatively, we could have used the zig-zag construction of expanders due to Reingold, Vadhan, and Wigderson \cite{RVW02}.

\begin{proof}[Proof of Theorem \ref{thmstep}]
Throughout this proof, we use the convention that $i$ and $j$ refer to rows, $k$ and $l$ refer to columns. The basic idea of the algorithm is the following. It is easy to see that 
\begin{equation} \label{tracefourthpower}
\tr(AA^\top AA^\top)=\sum_{i,j,k,l}  a_{i,k}a_{i,l}a_{j,k}a_{j,l}
.\end{equation}
In order to estimate this sum, we can use the expander to only compute the sum for pairs $(i,j)$ which form an edge of the expander (and then multiply by $n/d$). In fact, this is true even for the terms in (\ref{tracefourthpower}) corresponding to a fixed $k,l$. We can therefore use the expander to estimate the sum in (\ref{tracefourthpower}), and if it is large, find a $k$ for which the sum of the terms corresponding to $k$ are large. This will allow us to find sets $S,T$ as required.

Here is the algorithm.
\begin{enumerate}[1.]
\item Construct the matrix $M$ according to Lemma \ref{lemexpanderconstruction} that satisfies (\ref{eqexpandersumsquare}) (inputting $d_0=(3C^2\epsilon^{-4})^{1/c}$). Let $M=(m_{i,j})_{i,j=1}^n$. \label{stepconstraman}
\item For each $i,j$ with $m_{i,j}>0$, compute $s_{i,j}=\langle \va_i,\va_j \rangle$. \label{stepcomputems}
\item For each $k \in [n]$, compute 
\[b_k = \sum_{i,j=1}^n m_{i,j}a_{i,k}a_{j,k} s_{i,j} 
.\] \label{stepcomputebs}
\item If each $b_k \le \frac{2}{3}\epsilon^4 dn^2$, return that $\|A\| \le \epsilon n$. \label{stepyesdone}
\item If some $ b_k \ge \frac{2}{3} \epsilon^4 dn^2$, do the following:
\begin{enumerate}[a.]
\item Compute for each $l$
\[c_l=\sum_i a_{i,k}a_{i,l}
.\] \label{stepnocomputecs}
\item Let $T$ be either the set of $l$ such that $c_l>0$, or the set of $l$ such that $c_l<0$, whichever has a bigger sum in absolute value.\label{stepnofindT}
\item Compute for each $i \in [n]$ the values
\[d_T(i)=\sum_{k \in T} a_{i,k}
.\] \label{stepnocomputedT}
\item Let $S$ be either the set of $i \in [n]$ such that $d_{T}(i) > 0$ or the set of $i \in [n]$ such that
$d_T(i) < 0$, whichever has a bigger sum in absolute value. \label{stepnofindS}
\end{enumerate}
\end{enumerate}

Let us first analyze the running time. We can construct $M$ in time $(\log n)^{O(1)} dn$. We can compute each $s_{i,j}$ in $O(n)$ time, so computing all of them takes $O(dn^2)$ time in total. Computing each $b_k$ then similarly takes $O(dn)$ time (since we only need to sum the terms where $m_{i,j}>0$, and we keep a list of these entries), so that takes $O(dn^2)$ total time. If the algorithm says that $\|A\| \le \epsilon n$, then we are done. Otherwise, computing each $c_l$ can be done in time $O(n)$, so that takes $O(n^2)$ time in total. We then obtain $T$ in $O(n)$ time. Computing $S$ then similarly takes $O(n^2)$ time. Since $d=(C/\epsilon)^{O(1)}$, this shows that the algorithm runs in time $(C/\epsilon)^{O(1)}n^2$.

We now show that the algorithm is correct. First, we show the following lemma, which makes precise that we can use the expander to estimate the sum (\ref{tracefourthpower}).
\begin{lemma}
For any $k,l \in [n]$, we have
\begin{equation} \label{eqexpC4klclose}
\left|\sum_{i,j} a_{i,k}a_{i,l}a_{j,k}a_{j,l} -\frac{n}{d}\sum_{i,j}m_{i,j}a_{i,k}a_{i,l}a_{j,k}a_{j,l}\right| 
\le \frac{C^2n^2}{d^c} \le \frac{\epsilon^4}{3}n^2.
\end{equation}
\end{lemma}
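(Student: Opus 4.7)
The plan is to apply the expander inequality \eqref{eqexpandersumsquare} from Lemma~\ref{lemexpanderconstruction} to a cleverly chosen vector. Concretely, for any fixed $k, l \in [n]$, define $\vv = (v_i)_{i=1}^n \in \R^n$ by $v_i := a_{i,k} a_{i,l}$. Then $\left(\sum_i v_i\right)^2 = \sum_{i,j} a_{i,k}a_{i,l}a_{j,k}a_{j,l}$ and $\vv^\top M \vv = \sum_{i,j} m_{i,j}\, a_{i,k}a_{i,l}a_{j,k}a_{j,l}$, which matches exactly the two quantities whose difference we want to bound.

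Substituting into \eqref{eqexpandersumsquare} therefore gives
\[
\left|\sum_{i,j} a_{i,k}a_{i,l}a_{j,k}a_{j,l} - \frac{n}{d}\sum_{i,j}m_{i,j}a_{i,k}a_{i,l}a_{j,k}a_{j,l}\right| \;\le\; \frac{n}{d^c}\,\sum_i a_{i,k}^2 a_{i,l}^2,
\]
so the whole task reduces to bounding $\|\vv\|_2^2 = \sum_i a_{i,k}^2 a_{i,l}^2$. This is where both of the hypotheses of Theorem~\ref{thmstep} come into play: using $\|A\|_{\max} \le C$ we pull out one factor as $a_{i,k}^2 \le C^2$, and then the column bound $\|\va^l\|_2^2 = \sum_i a_{i,l}^2 \le n$ handles the remaining sum. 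This yields $\sum_i a_{i,k}^2 a_{i,l}^2 \le C^2 n$, giving the first inequality $\le C^2 n^2 / d^c$.

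For the second inequality, I would recall that in step~\ref{stepconstraman} of the algorithm the expander was constructed with input parameter $d_0 = (3 C^2 \epsilon^{-4})^{1/c}$, and Lemma~\ref{lemexpanderconstruction} guarantees $d \ge d_0$. Thus $d^c \ge 3C^2 \epsilon^{-4}$, which rearranges to $C^2 n^2/d^c \le \epsilon^4 n^2 / 3$, completing the chain.

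There is no real obstacle here: the whole lemma is an instantiation of the expander mixing inequality specialized to the vector $v_i = a_{i,k}a_{i,l}$. The only place one has to be attentive is in bounding $\|\vv\|_2^2$, where it is essential to combine the max-norm bound with the column $L^2$-bound (using either alone would not suffice in general); this is precisely why Theorem~\ref{thmstep} is stated with both assumptions.
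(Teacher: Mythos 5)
Your proof is correct and follows the paper's argument exactly: both apply \eqref{eqexpandersumsquare} to the vector with entries $v_i = a_{i,k}a_{i,l}$ and bound its squared $L^2$-norm by $C^2 n$. If anything you are slightly more careful than the paper, which justifies $\|\vv\|_2^2 \le C^2 n$ citing only $\|A\|_{\max}\le C$ (which alone would give only $C^4 n$); as you note, the column bound $\sum_i a_{i,l}^2 \le n$ is also needed, and you make explicit the role of the choice $d_0=(3C^2\epsilon^{-4})^{1/c}$ from step~\ref{stepconstraman} in getting the second inequality.
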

\begin{proof}
Let $\va_{k,l}$ be the vector with entries $(\va_{k,l})_i=a_{i,k}a_{i,l}$. 
Since each $|a_{i,j}| \le C$, we have that $\|\va_{k,l}\|_2^2 \le C^2n$. Therefore, by (\ref{eqexpandersumsquare}),
\[\left|\left( \sum_i a_{i,k}a_{i,l} \right)^2 -\frac{n}{d}\va_{k,l}^\top M \va_{k,l}\right| 
\le \frac{C^2n^2}{d^c}.\]
Clearly
\[\left( \sum_i a_{i,k}a_{i,l} \right)^2=\sum_{i,j} a_{i,k}a_{i,l}a_{j,k}a_{j,l}
,\] and by the definition of $M$ and $\va_{k,l}$, we have
\[\va_{k,l}^\top M \va_{k,l}=\sum_{i,j}m_{i,j}a_{i,k}a_{i,l}a_{j,k}a_{j,l}
.\]
\end{proof}
\begin{lemma}
If the algorithm returns that $\|A\|_{2} \le \epsilon n$ then it is correct. 
\end{lemma}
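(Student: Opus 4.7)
My plan is to show that the hypothesis $b_k \le \tfrac{2}{3}\epsilon^4 d n^2$ for every $k$ forces $\tr((AA^\top)^2)$ to be small, and then deduce $\|A\|\le \epsilon n$ from the elementary bound $\|A\|^4 \le \tr((AA^\top)^2) = \sum_i \sigma_i(A)^4$.

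First I would expand each $b_k$ by substituting $s_{i,j} = \langle \va_i, \va_j\rangle = \sum_l a_{i,l}a_{j,l}$:
\[
b_k \;=\; \sum_{l=1}^n \sum_{i,j=1}^n m_{i,j}\,(a_{i,k}a_{i,l})(a_{j,k}a_{j,l}) \;=\; \sum_{l=1}^n \va_{k,l}^{\top} M\,\va_{k,l},
\]
with $\va_{k,l}$ defined as in the preceding lemma. Summing over $k$,
\[
\frac{n}{d}\sum_{k=1}^n b_k \;=\; \frac{n}{d}\sum_{k,l=1}^n \sum_{i,j=1}^n m_{i,j}\, a_{i,k}a_{i,l}a_{j,k}a_{j,l}.
\]

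Next, I would sum the per-pair estimate (\ref{eqexpC4klclose}) over all $n^2$ choices of $(k,l)$; by the triangle inequality, together with the identity $\tr((AA^\top)^2) = \sum_{i,j,k,l} a_{i,k}a_{i,l}a_{j,k}a_{j,l}$ from (\ref{tracefourthpower}), this yields
\[
\left\lvert \tr((AA^\top)^2) - \frac{n}{d}\sum_{k=1}^n b_k \right\rvert \;\le\; \frac{\epsilon^4}{3}\,n^4.
\]

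Finally, the algorithm enters its ``yes'' branch exactly when $b_k \le \tfrac{2}{3}\epsilon^4 d n^2$ for all $k$, so $\tfrac{n}{d}\sum_k b_k \le \tfrac{2}{3}\epsilon^4 n^4$, and combining with the preceding display gives $\tr((AA^\top)^2) \le \epsilon^4 n^4$. Since $\tr((AA^\top)^2) = \sum_i \sigma_i(A)^4 \ge \|A\|^4$, we obtain $\|A\|\le \epsilon n$, as required.

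The only delicate point is that individual $b_k$'s are not guaranteed to be nonnegative (the matrix $M$ has nonnegative entries but is not necessarily positive semidefinite), so we cannot relate $\tr((AA^\top)^2)$ to any single $b_k$; we must first sum over $k$ and compare the aggregate with the nonnegative quantity $\tr((AA^\top)^2)$ using the expander approximation. Once this structural observation is in place, everything reduces to a one-line trace inequality, so I do not anticipate any real obstruction.
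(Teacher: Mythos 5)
Your proof is correct and takes essentially the same approach as the paper: expand $b_k$ via $s_{i,j}=\langle\va_i,\va_j\rangle$, sum the per-pair expander estimate \eqref{eqexpC4klclose} over all $(k,l)$, combine with the hypothesis $b_k\le\frac{2}{3}\epsilon^4 dn^2$ to bound $\tr((AA^\top)^2)\le\epsilon^4 n^4$, and then use that the largest singular value is at most the fourth root of $\tr((AA^\top)^2)$. The only cosmetic difference is that the paper goes through the intermediate identity $\sum_k b_k=\sum_{i,j}m_{i,j}\langle\va_i,\va_j\rangle^2$ while you sum $b_k=\sum_l\va_{k,l}^\top M\va_{k,l}$ directly; the ``delicate point'' you flag about the sign of individual $b_k$'s is harmless and handled identically in both arguments (in fact $\sum_k b_k=\sum_{i,j}m_{i,j}\langle\va_i,\va_j\rangle^2\ge 0$ since $m_{i,j}\ge 0$).
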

\begin{proof}


We have
\[\sum_{k,l} \sum_{i,j}m_{i,j}a_{i,k}a_{i,l}a_{j,k}a_{j,l} = \sum_{i,j}m_{i,j} \langle \va_i,\va_j \rangle^2 = \sum_k\sum_{i,j}m_{i,j}a_{i,k}a_{j,k} \langle \va_i,\va_j \rangle = \sum_k b_k \le \frac{2}{3}\epsilon^4dn^3
.\]
Summing (\ref{eqexpC4klclose}) over all pairs $k,l \in [n]$, we have
\[\left| \sum_{k,l} \sum_{i,j}a_{i,k}a_{i,l}a_{j,k}a_{j,l} - \frac{n}{d} \sum_{k,l} \sum_{i,j}m_{i,j} a_{i,k}a_{i,l}a_{j,k}a_{j,l} \right| \le \frac{\epsilon^4}{3}n^4
.\]
Therefore, 
\[\tr AA^\top AA^\top=\sum_{i,j,k,l}a_{i,k}a_{i,l}a_{j,k}a_{j,l} \le \frac{n}{d}\sum_{i,j}m_{i,j} \langle \va_i, \va_j \rangle^2 + \frac{\epsilon^4}{3}n^4 \le \epsilon^4n^4
.\] Since $\tr A A^\top  A A^\top $ is the sum of the fourth powers of the singular values, this implies that each singular value is at most $\epsilon n$.
\end{proof}

\begin{lemma}
If the algorithm returns $S$ and $T$, then
\[\left|\sum_{(i,l) \in S \times T}a_{i,l}\right| \ge \frac{\epsilon^8}{100}n^2
.\]
\end{lemma}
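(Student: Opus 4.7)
The plan is to trace backwards through the construction of $S$ and $T$: apply (\ref{eqexpC4klclose}) once with $k$ fixed (summed over $l$), then alternate Cauchy--Schwarz with sign splits. At a high level, $b_k$ large $\Rightarrow$ $\sum_l c_l^2$ large (expander bound) $\Rightarrow$ $\sum_l |c_l|$ large (since $|c_l|\le n$) $\Rightarrow$ $|\sum_{l\in T}c_l|$ large (sign split); rewriting this as a sum over $i$ and repeating the same pattern (Cauchy--Schwarz, $L^2\to L^1$, sign split) yields $S$.

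More precisely, fix $k$ with $b_k\ge \tfrac{2}{3}\epsilon^4 dn^2$. Since
\[
\sum_l \sum_{i,j} m_{i,j}a_{i,k}a_{i,l}a_{j,k}a_{j,l} \;=\; \sum_{i,j}m_{i,j}a_{i,k}a_{j,k}\langle\va_i,\va_j\rangle \;=\; b_k,
\]
summing (\ref{eqexpC4klclose}) over $l\in[n]$ gives $\sum_l c_l^2 \ge \tfrac{n}{d}b_k - \tfrac{\epsilon^4}{3}n^3 \ge \tfrac{\epsilon^4}{3}n^3$. The bound $|c_l|=|\langle\va^k,\va^l\rangle|\le \|\va^k\|_2\|\va^l\|_2\le n$ gives $\sum_l |c_l|\ge \tfrac{\epsilon^4}{3}n^2$, so the $T$ sign split produces $|\sum_{l\in T}c_l|\ge \tfrac{\epsilon^4}{6}n^2$. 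Rewriting $\sum_{l\in T}c_l = \sum_i a_{i,k}d_T(i)$ and applying Cauchy--Schwarz along $\va^k$ yields $\sum_i d_T(i)^2\ge \tfrac{\epsilon^8}{36}n^3$. Then $|d_T(i)|\le \sqrt{|T|}\,\|\va_i\|_2\le n$ gives $\sum_i|d_T(i)|\ge \tfrac{\epsilon^8}{36}n^2$, and the $S$ sign split delivers
\[
\Bigl|\sum_{(i,l)\in S\times T}a_{i,l}\Bigr| \;=\; \Bigl|\sum_{i\in S}d_T(i)\Bigr| \;\ge\; \frac{\epsilon^8}{72}n^2 \;\ge\; \frac{\epsilon^8}{100}n^2.
\]

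The critical point---and the conceptual reason the theorem is set up with $L^2$-norm hypotheses---is that the two bounds $|c_l|\le n$ and $|d_T(i)|\le n$ require only the row- and column-norm hypothesis $\|\va_i\|_2,\|\va^k\|_2\le \sqrt{n}$, not the cruder entrywise bound $\|A\|_{\max}\le C$. Invoking the latter would inject a factor of $C$ into the final discrepancy, which would be fatal for the outer iteration in the Frieze--Kannan decomposition where $C$ may grow as cut matrices are subtracted. The main obstacle is accordingly not the algebra but this bookkeeping constraint: one must verify that at every stage the relevant inequalities are powered only by the $L^2$-norm bounds preserved by the outer loop.
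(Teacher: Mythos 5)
Your proof is correct and follows essentially the same route as the paper; the two are just phrased differently (you work with the scalar quantities $c_l$ and $d_T(i)$ directly, while the paper packages the same steps as $\vu^\top A\vv$, $\|\vu^\top A\|_1$, $\|A\bm 1_T\|_2$, $\|A\bm 1_T\|_\infty$, $\|A\bm 1_T\|_1$). In particular, $\vu^\top A\vv=\sum_l c_l^2$, $\|\vu^\top A\|_1=\sum_l|c_l|$, and $\|A\bm 1_T\|_p$ is the $\ell^p$-norm of $(d_T(i))_i$, so the chain $\sum_l c_l^2\ge\tfrac{\epsilon^4}{3}n^3\Rightarrow\sum_l|c_l|\ge\tfrac{\epsilon^4}{3}n^2\Rightarrow|\sum_{l\in T}c_l|\ge\tfrac{\epsilon^4}{6}n^2\Rightarrow\sum_i d_T(i)^2\ge\tfrac{\epsilon^8}{36}n^3\Rightarrow\sum_i|d_T(i)|\ge\tfrac{\epsilon^8}{36}n^2\Rightarrow|\sum_{i\in S}d_T(i)|\ge\tfrac{\epsilon^8}{72}n^2$ is exactly the paper's, constant for constant. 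Your closing observation about why the $L^2$ row/column hypotheses (rather than $\|A\|_{\max}\le C$) power the bounds $|c_l|\le n$ and $|d_T(i)|\le n$ is also correct and is indeed the structural reason the theorem is stated this way.
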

\begin{proof}
First, note that for the particular $k$ we obtain in the algorithm, we have
\[ \frac{2}{3}\epsilon^4 dn^2 \le b_k = \sum_{i,j}m_{i,j}a_{i,k}a_{j,k}b_{i,j}=\sum_{i,j,l}m_{i,j} a_{i,k}a_{j,k} a_{i,l}a_{j,l}
.\]
we claim that we have
\[\sum_{l,i,j \in [n]} a_{i,k}a_{j,k} a_{i,l}a_{j,l} \ge \frac{n}{d}\sum_{i,j,l}m_{i,j} a_{i,k}a_{j,k} a_{i,l}a_{j,l} -\frac{\epsilon^4}{3}n^3\ge \frac{\epsilon^4}{3} n^3
.\]
Indeed, for any fixed $l$, by (\ref{eqexpC4klclose}), we have
\[\left|\sum_{i,j \in [n]} a_{i,k}a_{j,k} a_{i,l}a_{j,l} - \frac{n}{d}\sum_{i,j}m_{i,j} a_{i,k}a_{j,k} a_{i,l}a_{j,l} \right|\le \frac{\epsilon^4}{3}n^2
,\] and we can add this up over all $l \in [n]$.
Let $\vu =(a_{i,k})_{i=1}^n$, 
and $\vv$ be the vector with coordinates
\[v_l=\sum_i a_{i,k}a_{i,l}
.\]
Then $\|\vv\|_\infty \le n$, and 
$\|\vu\|_2 \le \sqrt{n}$ and we have
\[ \vu^\top A\vv \ge \frac{\epsilon^4}{3} n^3
.\]
Note however, that 
\[|\vu^\top A \vv| \le \|\vu^\top A\|_1 \|\vv\|_\infty
.\]
Therefore, we obtain that 
\[\|\vu^\top A\|_1 \ge \frac{\epsilon^4}{3}n^2 
.\]
Since $T$ consists of either the positive or the negative coordinates of $\vu^\top A$, whichever one has larger sum in absolute value, this implies that the $T$ that we obtain in step \ref{stepnofindT} satisfies, with $\bm{1}_T$ the characteristic vector, 
\[\left|\vu^\top A\bm{1}_T\right| \ge \frac{\epsilon^4}{6}n^2
.\] Since $\|\vu\|_2 \le \sqrt{n}$, by the Cauchy-Schwarz inequality, this implies that 
\[\|A\bm{1}_T\|_2^2 \ge\frac{(\vu^\top A\bm{1}_T)^2}{\|\vu\|_2^2} \ge \frac{\epsilon^8}{36}n^3
.\]
Since each row $\va_i$ of $A$ has $\|\va_i\|_2 \le \sqrt{n}$, we also have that 
\[\|A \bm{1}_T\|_\infty \le \sqrt{n}\|\bm{1}_T\|_2 \le n
.\]
Therefore, 
\[\|A \bm{1}_T\|_1 \ge \frac{\|A\bm{1}_T\|_2^2}{\|A\bm{1}_T\|_\infty} \ge \frac{\epsilon^8}{36}n^2
.\]
This means that for the $S$ that we obtain in step \ref{stepnofindS}, we have, if $\bm{1}_S$ is the characteristic vector,
\[\left|\bm{1}_S^\top A\bm{1}_T\right| \ge \frac{\epsilon^8}{72} n^2 \ge \frac{\epsilon^8}{100} n^2
,\] which is what we wanted to show.
\end{proof}
We have seen that either output of the algorithm must be correct, so this completes the proof of Theorem \ref{thmstep}.
\end{proof}

Before proving Theorem \ref{thmcomplalg}, we need one more technical lemma.
\begin{lemma} \label{lemmamakeeachrowcolumnbig}
	There exists an $O(n^2)$ time algorithm which takes as input a matrix $A \in \RR^{n \times n}$ and subsets $S,T \subseteq [n]$ such that 
	\[\sum_{\iskt} a_{i,k} \ge \epsilon' n^2
	,\] and outputs sets $S',T' \subseteq [n]$ such that
	\[\sum_{i \in S',k \in T'} a_{i,k} \ge \frac{2}{3}\epsilon' n^2
	.\] Furthermore, for any $i \in S'$, 
	\[\sum_{k \in T'} a_{i,k} \ge \frac{\epsilon'}{6}n 
	,\] and for any $k \in T'$, 
	\[\sum_{i \in S'} a_{i,k} \ge \frac{\epsilon'}{6}n 
	.\]
\end{lemma}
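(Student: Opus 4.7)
The plan is to use a greedy peeling algorithm: initialize $S' := S$ and $T' := T$, then iteratively remove any row or column whose partial sum (over the current $T'$ or $S'$, respectively) falls below the target threshold $\frac{\epsilon'}{6}n$. Concretely, while there exists $i \in S'$ with $\sum_{k \in T'} a_{i,k} < \frac{\epsilon'}{6} n$, remove $i$ from $S'$; while there exists $k \in T'$ with $\sum_{i \in S'} a_{i,k} < \frac{\epsilon'}{6} n$, remove $k$ from $T'$. Terminate when no such bad row or column exists. The output conditions on rows and columns then hold by construction.

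For correctness of the bound on the sum, observe that removing a row $i$ from $S'$ decreases $\sum_{i \in S', k \in T'} a_{i,k}$ by exactly the current value of $\sum_{k \in T'} a_{i,k}$, which is strictly less than $\frac{\epsilon'}{6} n$ at the moment of removal (and similarly for column removals). Since we remove at most $n$ rows and at most $n$ columns, the total decrease is strictly less than $2n \cdot \frac{\epsilon'}{6} n = \frac{\epsilon'}{3} n^2$. Hence the final sum satisfies
\[
\sum_{i \in S', k \in T'} a_{i,k} > \epsilon' n^2 - \frac{\epsilon'}{3} n^2 = \frac{2}{3} \epsilon' n^2,
\]
as required. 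Note that the decrease bound goes through even if some entries of $A$ are negative, since the removal condition is a strict upper bound on the row/column sum, not on its absolute value.

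For the $O(n^2)$ running time, maintain two arrays $r_i = \sum_{k \in T'} a_{i,k}$ for $i \in S'$ and $c_k = \sum_{i \in S'} a_{i,k}$ for $k \in T'$, computed in $O(n^2)$ time upfront. Each time we remove a row $i$, update $c_k \leftarrow c_k - a_{i,k}$ for all $k \in T'$ in $O(n)$ time (and symmetrically when removing a column, update the $r_j$'s). To locate bad rows/columns efficiently, after each update check whether the new value of $c_k$ (or $r_j$) has dropped below the threshold, and if so push it onto a queue of pending removals; this costs only $O(1)$ per update. Since the total number of removals is at most $2n$ and each costs $O(n)$, the overall running time is $O(n^2)$.

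The only mildly subtle point — and the one place where the argument could go wrong if formulated carelessly — is that the row sums $r_i$ change over time as columns are removed (and conversely), so the threshold is applied to a moving target. However, since we charge each removal against the current value of its partial sum at the moment it is removed, and this current value telescopes exactly into the total decrease of $\sum_{i \in S', k \in T'} a_{i,k}$, the simple bound of $\frac{\epsilon'}{3} n^2$ on the total loss is valid regardless of the order of removals.
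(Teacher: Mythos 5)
Your proposal is correct and is essentially identical to the paper's proof: both use the same greedy peeling algorithm (repeatedly delete any row or column whose current partial sum falls below $\frac{\epsilon'}{6}n$, charging each removal against its sum at the moment of removal to bound the total loss by $\frac{\epsilon'}{3}n^2$), with the same $O(n^2)$ running-time analysis via incremental updates of the row and column sums.
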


\begin{proof}
	Here is the algorithm.
	\begin{enumerate}[1.]
		\item To start, set $S'=S$ and $T'=T$.
		\item For each $i \in S'$ and each $k \in T'$, store the sum of the corresponding row or column in the submatrix induced by $S' \times T'$. \label{stepstoresums}
		\item Check whether there is a row or column with sum less than $\frac{\epsilon'}{6}n$.
		\item If there is, delete it, and update the row or column sums by subtracting the corresponding element from each sum.
		\item Go back to step 3 and repeat until no such row or column remains. 
	\end{enumerate}
	We first show that the running time is $O(n^2)$. We can compute each row and column sum in $O(n)$ time, therefore step \ref{stepstoresums} takes $O(n^2)$ time total. Each time we delete an element from $S'$ or $T'$, we perform $O(n)$ subtractions. The loops runs for at most $2n$ iterations since $|S|+|T| \le 2n$. Thus the algorithm takes $O(n^2)$ time.
	
	We next show that the algorithm is correct. In each step, the sum decreases by at most $\frac{\epsilon'}{6}n$, and there are at most $2n$ steps total. Therefore, after this process, for the $S'$ and $T'$ that we kept, we must still have
	\[\sum_{(i,k) \in S \times T} a_{i,k} \ge \frac{2}{3}\epsilon' n^2
	.\] In particular, this implies that when the algorithm terminates, $S'$ and $T'$ cannot be empty. By the definition of the algorithm, if it terminates, we must have the property that for any $i \in S'$, 
	\[\sum_{k \in T'} a_{i,k} \ge \frac{\epsilon'}{6}n 
	,\] and for any $k \in T'$, 
	\[\sum_{i \in S'} a_{i,k} \ge \frac{\epsilon'}{6}n 
	.\]
	This completes the proof of the lemma.
\end{proof}
We are now ready to prove our main theorem.
\begin{proof}[Proof of Theorem \ref{thmcomplalg}]
Let $\epsilon'=\epsilon^8/100$. Here is the algorithm. We iteratively construct a sequence of matrices as follows.
\begin{enumerate}[1.]
	\item Set $A_0=A$.
	\item For each $l$ starting at $0$, do the following.
	\begin{enumerate}[a.]
		\item Apply the algorithm from Theorem \ref{thmstep} to $A_l$.
		\item If the algorithm returns that $\|A_l\| \le \epsilon n$, then FINISH.
		\item Otherwise, the algorithm outputs sets $S,T \subseteq [n]$ such that
		\[\left|\sum_{i \in S,k \in T} a_{i,k} \right| \ge \epsilon' n^2
		.\] Let $\sigma \in \{-1,1\}$ be the sign of the above sum.
		\item Use Lemma \ref{lemmamakeeachrowcolumnbig}, applied to $\sigma A_l$ (and $S$, $T$ from above), to find $S',T' \subseteq [n]$ such that
		\[\sigma\sum_{i \in S',k \in T'} a_{i,k} \ge \frac{2}{3}\epsilon' n^2
		.\] Furthermore, for any $i \in S'$, 
		\[\sigma\sum_{k \in T'} a_{i,k} \ge \frac{\epsilon'}{6}n 
		,\] and for any $k \in T'$, 
		\[\sigma\sum_{i \in S'} a_{i,k} \ge \frac{\epsilon'}{6}n 
		.\]
		Replace $S$ and $T$ with $S'$ and $T'$.
		\item Let $S_l=S$, $T_l=T$, $t=\sigma \frac{\epsilon'}{3}$, and 
		$A_{l+1}=A_l - tK_{S_l,T_l}$.
	\end{enumerate}
\end{enumerate}

Let us first show that we can indeed apply Theorem \ref{thmstep} to each $A_l$. We first show that if $\vv$ is a row or column of $A_l$, then
\[\|\vv\|_2^2 \le n
.\] By the assumptions of the theorem, this is true for $l=0$. Fix $l$ so that it is true for $A_l$, let $a_{i,k}$ be the entries of $A_l$, and let $i$ be any row. If $i \notin S$, then the row does not change, so the $L^2$ norm of the row does not change in $A_{l+1}$. If $i \in S$, then we have 
\[\sum_{k \in T} a_{i,k}^2 - \left(a_{i,k}-t \right)^2 = 2t\sum_{k \in T} a_{i,k} - |T|t^2 \ge 2t\sigma\frac{\epsilon'}{6}n - t^2n= t\left(\sigma\frac{\epsilon'}{3}-t\right)n = 0
.\] Since the entries in $A_l$ were $a_{i,k}$, and the entries in $A_{l+1}$ are $a_{i,k}-t$, this implies that the $L^2$-norm of the corresponding row in $A_{l+1}$ cannot increase, and so for each row it is still at most $\sqrt{n}$. The analogous argument for columns shows that the same holds for each column.

Next, note that each entry of $A_0$ has absolute value at most $1$, and each entry changes by at most $\frac{\epsilon'}{3}$ when going from $A_l$ to $A_{l+1}$. Therefore, each entry of $A_l$ is at most $1+l\epsilon'/3$ in absolute value so we can apply Theorem \ref{thmstep} with $C=1+l\epsilon'/3$.

Finally, we show that the Frobenius norms of the matrices must decrease:
\[\|A_{l+1}\|_F^2 \le \|A_{l}\|_F^2- \frac{\epsilon'^2}{3}n^2
.\]
Let $a_{i,k}$ be the entries of $A_l$ again. We have 
\begin{multline*}
\|A_l\|_F^2-\|A_{l+1}\|_F^2=\sum_{\iskt} a_{i,k}^2-(a_{i,k}-t)^2= 2t\sum_{\iskt} a_{i,k} - |S||T|t^2 \\ \ge \sigma \frac{4}{3}t\epsilon' n^2 - |S||T|t^2 \ge \left(\sigma\frac{4}{3}t\epsilon'-t^2 \right)n^2
.\end{multline*} With our choice of $t=\sigma\frac{\epsilon'}{3}$, this is implies that
\[\|A_{l+1}\|_F^2 \le \|A_{l}\|_F^2- \frac{\epsilon'^2}{3}n^2
.\]

Now, we must have $\|A_0||_F^2 \le n^2$. Since the square of the Frobenius norm decreases by at least $\frac{\epsilon'^2}{3}n^2=\frac{\epsilon^{16}}{30000}n^2$ in each step, the number of steps is at most $O(1/\epsilon^{16})$. Therefore, after at most $O(1/\epsilon^{16})$, the algorithm must terminate.

As for the running time, the algorithm from Theorem \ref{thmstep} (with $C=1+l\epsilon'/3$) takes at most $O((l/\epsilon)^{O(1)}n^2)=\epsilon^{-O(1)}n^2$ time as $l=O(\epsilon^{-16})$. The algorithm from Lemma \ref{lemmamakeeachrowcolumnbig} takes $O(n^2)$ time. Finally, as the number of steps is $O(\epsilon^{-16})$, the whole process takes $\epsilon^{-O(1)}n^2$ time.
\end{proof}

\section{Approximation algorithm for subgraph counts}
\label{sec:counts}

We would like to approximate the number of copies of a fixed $k$-vertex graph $H$ in an $n$-vertex graph $G$, up to an additive error of at most $\epsilon n^k$. In this section, we prove Theorem~\ref{thm:countingalgthmgood}, which claims an algorithm to perform the task in $O(\epsilon^{-O_H(1)} n^2)$ time.

%
%
%

It will be cleaner to work instead with $\hom(H, G)$, the number of graph homomorphisms from $H$ to $G$. This quantity differs from the number of (labeled) copies of $H$ in $G$ by a negligible $O_H(n^{v(H)-1})$ additive error. We use the following multipartite version.

\begin{definition}
Let $H$ be a graph on $[k]$, and let $G$ be a $k$-partite weighted graph with vertex sets $V_1, \dots, V_k$. We write
\begin{equation}\label{eq:hom-star}
\hom^*(H,G)=\sum_{(v_1, \dots, v_k) \in V_1\times \cdots \times V_k} \prod_{\{i,j\} \in E(H)} G(v_i,v_j),
\end{equation}
where $G(x,y)$ denotes the edge-weight of $\{x,y\}$ in $G$, as usual.
\end{definition}

Note that for graphs $H$ and $G$, $\hom^*(H, G)$ counts the number graph homomorphisms from $H$ to $G$ where every vertex $v_i \in V(H)$ is mapped to the associated vertex part $V_i$ in $G$.

For every graph $G$, there is a $k$-partite $G^*$, obtained by replicating each vertex of $G$ into $k$ identical copies and two vertices of $G^*$ are adjcent if the original vertices in $G$ they came from are adjacent, such that $\hom(H, G) = \hom^*(H, G^*)$. Thus Theorem~\ref{thm:countingalgthmgood} follows from its multipartite generalization below.

\begin{theorem}
There exists a deterministic algorithm that takes as input a graph $H$ on $[k]$, a $k$-partite graph $G$ with each vertex part having at most $n$ vertices, and $\epsilon>0$, and outputs, in time $\epsilon^{-O_H(1)} n^2$, a quantity that approximates $\hom^*(H,G)$ up to an additive error of at most $\epsilon n^k$.
\end{theorem}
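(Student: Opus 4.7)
The plan is to invoke Theorem \ref{thmcomplalg} separately on each bipartite graph $G_{ij}$ between $V_i$ and $V_j$ for $\{i,j\} \in E(H)$, producing cut-decomposition approximations $G'_{ij}$; then compute $\hom^*(H,G')$ directly from these decompositions, where $G'$ is the $k$-partite weighted graph whose weight on $(v_i,v_j)$ is $G'_{ij}(v_i,v_j)$; and finally control the difference $|\hom^*(H,G) - \hom^*(H,G')|$ by a counting lemma. Concretely, apply Theorem \ref{thmcomplalg} with accuracy parameter $\delta = c_H\epsilon$ (to be fixed at the end of the argument). This outputs, in $\delta^{-O(1)} n^2$ time per edge of $H$, sets $S^{(ij)}_r, T^{(ij)}_r$ and coefficients $c^{(ij)}_r$ (for $r \le r_{ij}$ with $r_{ij} = O(\delta^{-16})$) defining $G'_{ij} = d_{ij} + \sum_r c^{(ij)}_r \bm{1}_{S^{(ij)}_r} \bm{1}_{T^{(ij)}_r}^{\top}$ satisfying $\|G_{ij}-G'_{ij}\| \le \delta n$ (so in particular $d_\square(G_{ij}, G'_{ij}) \le \delta$).

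To compute $\hom^*(H,G')$, write each $G'_{ij}(v_i,v_j) = \sum_{\alpha=0}^{r_{ij}} f^{(ij)}_\alpha(v_i)\,g^{(ij)}_\alpha(v_j)$ as a sum of rank-one tensors (with $\alpha=0$ contributing the constant $d_{ij}$ and $\alpha \ge 1$ contributing a signed indicator product). Distributing the product in the definition of $\hom^*$ yields
\[
\hom^*(H,G') \;=\; \sum_\tau \prod_{i \in V(H)} \sum_{v_i \in V_i} F^\tau_i(v_i),
\]
where $\tau$ ranges over assignments of one tensor index to each edge of $H$, and $F^\tau_i(v_i)$ is the product of single-variable factors at vertex $i$ contributed by the edges of $H$ incident to $i$. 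There are $\prod_{\{i,j\} \in E(H)}(r_{ij}+1) = \delta^{-O_H(1)}$ such $\tau$'s, and each summand is computable in $O_H(n)$ arithmetic operations once the decompositions are stored, so this step runs in $\delta^{-O_H(1)} n$ time.

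For the error bound, let $\Delta_e = G_e - G'_e$ for each edge $e \in E(H)$ and expand $\hom^*(H,G') = \hom^*(H, G - \Delta)$ by multilinearity to obtain
\[
\hom^*(H,G) - \hom^*(H,G') \;=\; \sum_{\emptyset \ne S \subseteq E(H)} (-1)^{|S|+1} \sum_{v_1,\ldots,v_k} \prod_{e \in S} \Delta_e(v_{i_e}, v_{j_e}) \prod_{e \notin S} G_e(v_{i_e}, v_{j_e}).
\]
The crucial feature is that the non-$S$ factors come from $G$, whose weights lie in $[-1,1]$; I would then establish a counting lemma showing that the inner sum for each nonempty $S$ is bounded by $C_H \delta\, n^k$ in absolute value. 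The idea is to handle the edges of $S$ iteratively: for each $e \in S$ in turn, fix all variables not incident to $e$, factor out the (bounded) single-variable functions contributed at the endpoints of $e$ by the remaining non-$S$ edges, and apply either the cut-norm inequality $\sup_{f,g \in [-1,1]^n} |f^{\top} \Delta_e g| = O(\delta n^2)$ (when the endpoints of $e$ are disjoint from those of the other edges of $S$) or Cauchy--Schwarz combined with the spectral bound $\|\Delta_e\| \le \delta n$ (when a vertex is shared among several edges of $S$). Summing over the $2^{e(H)}$ nonempty $S$ yields $|\hom^*(H,G) - \hom^*(H,G')| \le C'_H \delta\, n^k$; taking $\delta = \epsilon/C'_H$ produces the claimed $\epsilon n^k$ additive error, with total running time $|E(H)|\cdot \delta^{-O(1)} n^2 + \delta^{-O_H(1)} n = \epsilon^{-O_H(1)} n^2$. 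The main obstacle is this counting lemma: the approximation $G'_{ij}$ returned by Theorem \ref{thmcomplalg} may have entries as large as $\delta^{-O(1)}$ in absolute value, so a naive hybrid argument that substitutes $G'$ for $G$ one edge at a time would incur unbounded factors at each substitution step. Expanding instead around the bounded graph $G$ (writing $G' = G - \Delta$ rather than $G = G' + \Delta$) is the key move that keeps every non-$\Delta$ factor bounded by $1$, so that the cut and spectral norm bounds on each $\Delta_e$ can be applied cleanly edge by edge.
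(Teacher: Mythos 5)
Your approach differs substantially from the paper's. The paper is \emph{recursive in the edges of $H$}: it applies the cut decomposition only to the single bipartite graph $G_{12}$, proves $|\hom^*(H,G)-\hom^*(H,G')|\le\epsilon n^k/2$ where $G'$ differs from $G$ on the edge $\{1,2\}$ alone (a one-edge counting step in which every other factor is a genuine $G$-weight, hence lies in $[0,1]$), and then uses multilinearity to write $\hom^*(H,G')=d(G_{12})\hom^*(H',G)+\sum_i c_i\hom^*(H',G^{(i)})$, where $H'=H-\{1,2\}$ and each $G^{(i)}$ is an \emph{induced subgraph of $G$} and therefore again $\{0,1\}$-weighted. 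Unbounded edge weights never appear inside a subgraph count, and the recursion bottoms out when $H$ has no edges. You instead decompose every bipartite piece of $G$ at once, compute $\hom^*(H,G')$ directly from the tensor form (that part is fine, and efficient), and try to close the loop with one counting lemma comparing $G$ to the fully replaced $G'$.

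That counting lemma is the gap, and it is exactly the trap the authors flag in their footnote (the retracted earlier version implicitly assumed $G'$ had bounded entries). Writing $G'=G-\Delta$ does keep the non-$\Delta$ factors in $[0,1]$, but the $\Delta_e$'s themselves have $\|\Delta_e\|_{\max}$ as large as $\delta^{-O(1)}$; Theorem~\ref{thmcomplalg} gives only the spectral bound $\|\Delta_e\|\le\delta n$ and per-row/column $L^2$ bound $\le\sqrt n$. In the inclusion--exclusion term for a set $S\subseteq E(H)$ with $|S|\ge 2$, several $\Delta_e$'s can meet at a common vertex $a$ of $H$; after fixing all variables except $v_a$ and the other endpoint of your chosen $e\in S$, the ``single-variable function'' $F(v_a)$ you want to factor out is a \emph{product of entries of several different $\Delta$'s}, so $\|F\|_2$ is only $\delta^{-O(1)}\sqrt n$ rather than $O(\sqrt n)$, and the resulting estimate on that $S$-term is $\delta^{-O(1)}n^k$, not $O(\delta n^k)$. (Already for $H=K_4$ and $S=E(K_4)$ I do not see how your stated tools yield $|\hom^*(K_4,\Delta)|=O(\delta n^4)$.) Either this counting lemma for spectrally small, row-$L^2$-bounded but $L^\infty$-unbounded $\Delta$'s must actually be proved --- which is a genuine piece of work, not accomplished by the sentence ``Cauchy--Schwarz combined with the spectral bound'' --- or one should switch to the paper's one-edge-at-a-time recursion, which sidesteps the issue entirely by never letting two unbounded factors meet.
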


\begin{proof}
We begin with a description of the algorithm. If $H$ has no edges, then $\hom^*(H, G)= |V_1| \cdots |V_k|$. Assume now that $H$ has at least one edge, say $\{1,2\}$ (relabeling if necessary). Denote the vertex parts of $G$ by $V_1, \dots, V_k$.  Let $G_{12}$ denote the bipartite graph induced by $V_1$ and $V_2$ in $G$, and $d(G_{12})=d(V_1,V_2)=e(V_1,V_2)/(|V_1||V_2|)$ to denote the edge density between $V_1$ and $V_2$ in $G$. By Theorem \ref{thmalgweakregbest}, we can algorithmically find $S_1, \dots, S_r \subseteq V_1$, $T_1, \dots, T_r \subseteq V_2$, and $c_1, \dots, c_r = O(\epsilon^8)$, with $r = O(\epsilon^{-16})$, such that the weighted bipartite graph $G_{12}'$ on vertex sets $V_1$ and $V_2$ defined by
\begin{equation}\label{eq:G'12}
G_{12}'= d(G_{12}) + \sum_{i=1}^r c_i K_{S_i,T_i}
\end{equation}
satisfies
\[
d_\square(G_{12},G'_{12}) \le \epsilon/2.
\]
Let $G^{(i)}$ be $G$ obtained by deleting the vertices $(V_1\setminus S_i) \cup (V_2 \setminus T_i)$. Let $H'$ be $H$ with edge $\{1,2\}$ removed. 
Since $H'$ has one fewer edge than $H$, we can recursively apply the algorithm to estimate each of $\hom^*(H', G)$, $\hom^*(H', G^{(1)}), \dots, \hom^*(H', G^{(r)})$ up to an additive error of at most $c\epsilon^9$, where $c$ is some absolute constant. Summing up a linear combination of these estimates, we obtain an estimate for
\[
d(G_1,G_2)\hom^*(H',G) +\sum_{i=1}^r c_i \hom^*(H',G^{(i)}),
\]
which we use as our estimate for $\hom^*(H, G)$.

Now we prove the correctness of the algorithm. Let $G'$ be obtained from $G$ by replacing the bipartite graph between $V_1$ and $V_2$ by $G'_{12}$. We claim that
\begin{equation} \label{eq:counting-lem-proof}
\abs{\hom^*(H, G) - \hom^*(H, G')} \le \frac{\epsilon n^k}{2}.
\end{equation}
Indeed,
\[
\hom^*(H, G) - \hom^*(H, G')
= \sum_{(v_1, \dots, v_k) \in V_1\times \cdots \times V_k}f_{v_3, \dots, v_k}(v_1)g_{v_3, \dots, v_k}(v_2) (G(v_1, v_2) - G'(v_1, v_2))
\]
for some $f_{v_3, \dots, v_k}(v_1),g_{v_3, \dots, v_k}(v_2) \in \{0,1\}$ obtained by appropriately grouping the $G(v_i, v_j)$ factors in \eqref{eq:hom-star}.\footnote{We use the assumption that $0 \le G \le 1$ in this step. In the analogous step in \cite{FLZ17}, we mistakenly also assumed that $0 \le G' \le 1$, which is not necessarily the case.} 
For fixed $(v_3, \dots, v_k) \in V_3 \times \cdots \times V_k$, we have
\begin{multline*}
\abs{\sum_{(v_1, v_2) \in V_1\times V_2} f_{v_3, \dots, v_k}(v_1)g_{v_3, \dots, v_k}(v_2) (G(v_1, v_2) - G'(v_1, v_2))}
\\
\le \max_{U \subset V_1, W\subset V_2} |e_{G_{12}}(U,W) - e_{G'_{12}}(U,W)| \le n^2 d_\square(G_{12},G'_{12}) \le \epsilon n^2/2.
\end{multline*}
Then, summing over all $(v_3, \dots, v_k) \in V_3\times \cdots \times V_k$ and applying the triangle inequality, we obtain \eqref{eq:counting-lem-proof}.

From \eqref{eq:G'12}, we have
\[
\hom^*(H,G') = d(G_{12})\hom^*(H',G) +\sum_{i=1}^r c_i \hom^*(H',G^{(i)}).
\]
Since $c_i = O(\epsilon^8)$ and $r = O(\epsilon^{-16})$, we obtain an estimate of $\hom^*(H,G')$ up to an additive error of at most $\epsilon n^k/2$ as long as each $\hom^*(H', -)$ in the above sum is estimated up to an additive error $c\epsilon^9$ for an appropriate positive constant $c$. Together with \eqref{eq:counting-lem-proof}, the estimate is within $\epsilon n^k$ of $\hom^*(H, G)$, as claimed.

Now we analyze the running time. It takes $\epsilon^{-O(1)} n^2$ time (independent of $H$) to find $S_1, \dotsc, S_r$, $T_1, \dotsc, T_r$, and $c_1, \dotsc, c_r$. Estimating each $\hom^*(H', G)$, $\hom^*(H', G^{(1)}), \dots, \hom^*(H', G^{(r)})$ up to an additive error of at most $c\epsilon^9$ takes $\epsilon^{-O_{H'}(1)} n^2$ time (by induction), and we need to perform $r+1 = O(\epsilon^{-16})$ such estimates. Thus the total running time is $\epsilon^{-O_H(1)}n^2$.
\end{proof}
%

\bibliography{ref}
\bibliographystyle{amsplain_mod2}

\end{document}